\newtheorem{theorem}{Theorem}[section]
\newtheorem{corollary}[theorem]{Corollary}
\newtheorem{proposition}[theorem]{Proposition}
\theoremstyle{remark}
\newtheorem{remark}[theorem]{Remark}
\newtheorem{definition}[theorem]{Definition}
\numberwithin{equation}{section}
\newcommand\1{1\!\!1}
\newcommand\BBg{\B_{\mathrm{b}}(\Ga_0^2)}
\newcommand\BBs{B_{\mathrm{bs}}(\Ga_0^2)}
\newcommand\Bbg{\B_{\mathrm{b}}(\Ga_0)}
\newcommand\Bbs{B_{\mathrm{bs}}(\Ga_0)}
\newcommand\Bb{\B_{\mathrm{c}}({X})}
\newcommand\B{\mathcal{B}}
\newcommand\D{\mathcal{D}}
\newcommand\cnt[2]{\text{\setbox2=\hbox{#1}\rlap{\hbox to \wd2{\hfil#2\hfil}}\box2}}
\newcommand\eps{\varepsilon}
\newcommand\Ga{\Gamma}
\newcommand\I{{\mathcal I}}
\newcommand\K{\mathcal{K}}
\newcommand\La{\Lambda}
\newcommand\la{\lambda}
\newcommand\ls{\mathrm{ls}}
\newcommand\MLf{\M_\mathrm{lf}(\Ga_0^2)}
\newcommand\Mlf{\M_\mathrm{lf}(\Ga_0)}
\newcommand\M{\mathcal{M}}
\newcommand\N{\mathbb{N}}
\newcommand\Ob{\mathcal{O}_{\mathrm{c}}({X})}
\newcommand\R{\mathbb{R}}
\newcommand\Star{\mathbin{\cnt{$\bigcirc$}{$\star$}}}
\DeclareMathOperator*{\esssup}{ess\,sup}
\author{Dmitri Finkelshtein\thanks{Institute of Mathematics,
         National Academy of Sciences of Ukraine,
         01601 Kiev-4, Ukraine, e-mail:fdl@imath.kiev.ua}}
\title{Towards on convolutions on configuration spaces. I.~Spaces of finite configurations}
\begin{document}

\maketitle
\begin{abstract}
We consider two types of convolutions ($\ast$ and $\star$) of functions on spaces of finite configurations (finite subsets of a phase space), and some their properties are studied. A connection of the $\ast$-convolution with the convolution of measures on spaces of finite configurations is shown. Properties of multiplication and derivative operators with respect to the $\ast$-convolution are discovered. We present also conditions when the $\ast$-convolution will be positive definite with respect to the $\star$-convolution.
\end{abstract}

{\bf Keywords}: Configuration spaces, $*$-calculus, Ruelle convolution, convolutions, positive definiteness, generating functional

{\bf MSC (2010)}: 82C22, 42A85, 42A82, 60K35

\section{Introduction}

Spaces of configurations (discrete subsets of a phase space) have been became a separate mathematical object of investigation starting from 1960'th. They were studied in different branches of mathematics such as functional analysis, mathematical physics, probability theory, topology. Finite and locally finite subsets of a phase space are useful objects for describing of mathematical models for different systems in applications: in physics, chemistry, biology, economics, social sciences etc. The corresponding interpretations for elements of the subsets are molecules, individuals, agents etc. In turn, the phase space may be a discrete set, for instance, a lattice or, more generally, a graph in, say, an Euclidean space. Lattice systems were intensively studied in literature, see e.g. \cite{Lig1985,Lig1999,KL1999,MP1991,Pre2009}. In the case then the phase space is a continuum set, for instance, an Euclidean space or, more generally, a topological space (say, a manifold), the systems describing by the corresponding space of configurations are called continuous.

In number of problems of statistical physics a physical system may be modeled by huge or even infinite set in a continuum phase space. Mathematical description of such systems was initiated in XIX~century by L.\,Boltzmann and his followers, see e.g. \cite{Cer1988,BE1983}. In~XX~century this area was intensively studied, started from the fundamental papers by J.\,W.\,Gibbs, see e.g. \cite{Gib1960}, what were background for the modern theory of Gibbs measures on configuration spaces. Started from 1940th mathematical models of continuous systems in statistical physics were actively studied by N.\,N\,Bogolyubov, see e.g. \cite{Bog1962}, and his followers. In 1960th the necessity of a rigorous description for spaces of locally finite configuration and states (probability measures) on such spaces became clear. The corresponding investigations were started by R.\,L.\,Dobrushin, O.\,Lenford, and D.\,Ruelle, see e.g. the review \cite{DSS1989} and the references therein. Meanwhile, the detailed analysis on spaces of configurations was initiated in paper \cite{VGG1975} by A.\,M.\,Vershik, I.\,M.\,Gelfand, M.\,I.\,Graev. The modern form of the analysis on spaces of configurations was gained in papers by S.\,Albeverio, Yu.\,Kondratiev, M.\,R\"{o}ckner and their followers, see e.g. \cite{AKR1998a,AKR1998,Roc1998,Kun1999}.

An actual bibliography for papers about configuration spaces over a continuum phase space may become now a scientific paper itself. Even just list of authors need to much journal's space. Therefore, we restrict ourselves to enumerate the main areas of this business that have a long history and are intensively developed now. Namely, study of topological, metric, measurable, and algebraic structures on spaces of configurations; measure theory, in particular, study of Gibbs and Cox measures, determinantal and permanent measures; calculus, differential geometry, and harmonic analysis on configuration spaces; deterministic and stochastic dynamical systems, their ergodic and invariant measures; Markov evolutions, equilibrium and non-equilibrium stochastic dynamics, in particular, diffusion, birth-and-death, jump, Hamiltonian dynamics; different scalings of the dynamics above, hydrodynamic and kinetic equations etc.

The present paper deals with different convolutions (between functions and between measures) on spaces of configurations which were studied in literature. These questions are important for the development of a rich analysis on configuration spaces. On the other hand the convolutions are actively used for further investigations, in particular, for study of stochastic dynamics on configuration spaces. Due to the journal limitation the publication is divided on two parts. This first part is devoted to the convolutions on the spaces of finite configurations, that are discrete subsets of a continuum phase space which have an arbitrary but a finite number of points. It is worth noting that spaces of finite configurations are a object of infinite-dimensional analysis. Note also that spaces of locally finite subsets (which are considered in the second paper) are not a direct generalization of spaces of finite configurations. More exact description of their connection is an analogy with the correspondence between Hilbert spaces and the sequence space $l_2$ that may be considered as a space of Fourier coefficients. This approach to harmonic analysis on spaces of locally finite subsets was initiated in~\cite{KK2002} and was applied in dozens of recent publications.

The present paper is organized as follows. In Section 2 we describe the main structures and their properties on spaces of finite configurations which will be used in both papers. In Section 3 we study the so-called Ruelle $*$-calculus, defined via the Ruelle convolution~\cite{Rue1964}. This $*$-convolution was used often in literature, however, some its analytic properties were unknown before. In Section 4 we consider a multiplication operator with respect to the Ruelle convolution. In Section 5 there are several subjects, namely: a convolution of measures on spaces of finite configurations and its connection with the Ruelle convolution of functions, properties of the generating functional (the so-called Bogolyubov functional, see details in e.g. \cite{KKO2006}), properties of the derivative operator with respect to the Ruelle convolution, the connection between the $*$-convolution and the $\star$-convolution introduces in~\cite{KK2002} by Yu.\,Kondratiev and T.\,Kuna.

Author would like to thank Prof. Dr. Yuri Kondratiev for useful discussions. The paper was partially supported by The Ukraine President Scholarship and Grant for young scientists.

\section{Spaces of finite configurations}
Let $X$ be a connected oriented non-compact Riemannian $C^\infty$-manifold, $\mathcal{O}({X})$ be a class of all open subsets from ${X}$, $\B({X})$ be the corresponding Borel $\sigma$-algebra. We denote classes of all open and Borel subsets from ${X}$ which have compact supports by $\Ob$ and~$\Bb$, correspondingly. Let $m$ be a non-atomic Radon measure on ${X}$, i.e., $m(\La)<\infty$, $\La\in\Bb$ and $m(\{x\})=0$, $x\in{X}$. Suppose also that there exists a sequence $\{\La_n\}_{n\in\N}\subset\Bb$ such that $\La_n\subset\La_{n+1}$, $n\in\N$ and $\bigcup_{n\in\N}\La_n=X$.

For any $Y\in\B({X})$ and~$n\in \N_0:=\N\cup \{0\}$, the set
 \begin{equation*}
 \Ga_Y^{(n)}:=\bigl\{ \eta \subset
 Y\bigm| |\eta |=n\bigr\}, \ n\in\N; \qquad \Ga_Y^{(0)}:=\{\emptyset\}
 \end{equation*}
is said to be a space of all $n$-point configurations on the set $Y$. Here and subsequently, the symbol $|\cdot|$ means a number of points in a discrete set.
For any $\La\in\Bb$, $\La\subset Y$, we denote $\eta_\La:=\eta\cap\La$ and consider a mapping $N_\La:\Ga_{0,Y}^{(n)}\to\N_0$, given by
 $N_\La(\eta):=|\eta_\La|$. For $n\in\N$, one set
 \[
 \widetilde{Y^n} = \bigl\{ (x_1,\ldots ,x_n)\in Y^n\bigm|
 x_k\neq x_l, \text{ if only } k\neq l\bigr\} .
 \]
We consider also a mapping
$\mathrm{sym}_{Y,n}:\widetilde{Y^n}\to\Ga_Y^{(n)}$,
$\mathrm{sym}_{Y,n}\bigl( (x_1,\ldots ,x_n)\bigr) := \{x_1,\ldots
,x_n\}$.
Then, one can identify the space of all $n$-point configurations $\Ga_Y^{(n)}$ with the quotient of $\widetilde{Y^n}$ with respect to the natural action of the permutation group $S_n$ on $\widetilde{Y^n}$. Thus, one can define in the space $\Ga_Y^{(n)}$ the family of open sets
 $\mathcal{O}\bigl(\Ga_{0,Y}^{(n)}\bigr):=\mathrm{sym}_{Y,n}^{-1}\bigl(
\mathcal{O}(\widetilde{Y^n})\bigr)$. The base of topology is formed by the system of sets
\[
 U_1\widehat{\times}\cdots\widehat{\times}U_n:=
 \bigl\{\eta\in\Ga_{0,Y}^{(n)}\bigm|N_{U_1}(\eta)=1,\ldots,
 N_{U_n}(\eta)=1\bigr\},
\]
where $U_1,\ldots,U_n\in\mathcal{O}_c({X})$, $U_1,\ldots,U_n\subset Y$ and
$U_i\cap U_j=\emptyset$ if only $i\neq j$. The Borel $\sigma$-algebra $\B\bigl(\Ga_Y^{(n)}\bigr)$ corresponding to $\mathcal{O}\bigl(\Ga_{0,Y}^{(n)}\bigr)$ coincides with the $\sigma$-algebra given by the family of mappings $N_\La$, $\La \in \Bb$, $\La\subset Y$, see e.g. \cite{AKR1998a}.

The space of finite configurations on a set $Y\in\B({X})$ is a disjoint union
 \begin{equation}\label{Ga0Y}
  \Ga_{0,Y}:=\bigsqcup_{n\in \N_0}\Ga_Y^{(n)}.
 \end{equation}
The structure of a disjoint union allows to define a topology
 $\mathcal{O}(\Ga_{0,Y})$ on $\Ga_{0,Y}$. The corresponding Borel
 $\sigma$-algebra we denote by $\B(\Ga_{0,Y})$.
 In the case then $Y={X}$ we will omit a subscript, i.e.,
 $\Ga^{(n)}:=\Ga_{X}^{(n)}$, $ \Ga_0:=\Ga_{0,{X}}$.

A set $B\in\B(\Ga_0)$ is said to be bounded if there exist $\La\in\Bb$ and~$N\in\N$, suh that $B\subset \bigsqcup_{n=0}^N\Ga_\La^{(n)}$. The class of all bounded sets from $\B(\Ga_0)$ we denote by $\Bbg$. A measure $\rho$ on~$\bigl(\Ga_0,\B(\Ga_0)\bigr)$ is said to be locally finite if $\rho(B)<\infty$ for all
$B\in\Bbg$. Let $\Mlf$ denote the class of all locally finite measures on~$\bigl(\Ga_0,\B(\Ga_0)\bigr)$. An important example of a locally finite measure on~$\Ga_0$ is the Lebesgue--Poisson measure that is defined as follows. The image on the space $\B(\Ga^{(n)})$ of the product-measure $m^{\otimes
n}$ on~$\widetilde{({X})^n}$ under the map
$\mathrm{sym}_{{X},n}$ we denote by $m^{(n)}$. The latter measure is well-defined since
$ m^{\otimes n}\bigl(({X})^n\setminus\widetilde{({X})^n}\bigr)=0$.
For $n=0$, we set $m^{(0)}(\{\emptyset\}):=1$.
Let $z>0$ be given. The Lebesgue--Poisson measure $\la_{z}$ on~$\bigl(\Ga_0,\B(\Ga_0)\bigr)$ is defined correspondingly to the expansion \eqref{Ga0Y} in the following way:
 \begin{equation}\label{la_z}
 \la_z :=\sum_{n=0}^\infty \frac {z^n}{n!} m^{(n)}.
 \end{equation}
For any $\La\in\Bb$, we preserve the same notation $\la_z$ for the restriction of $\la_z$ onto~$\Ga_{0,\La}$. The positive number $z$ is the intensity (the activity parameter) of the measure $\la_z$. For the case $z=1$, we will omit the subscript, namely, $\la:=\la_1$.
In \cite{Kun1999}, it was shown that, for any $A\in\B({X})$ with $m(A)=0$,
 \begin{equation*}
 \la\bigl(\{\eta\in\Ga_{0,Y} \mid \eta\cap
 A\neq\emptyset\}\bigr)=0,\qquad Y\in\B({X}).
 \end{equation*}
In particular, one can put $Y={X}$. Therefore, for any $\xi\in\Ga_0$, $x\in{X}$,
\begin{equation}\label{zeroset1}
    \la\bigl(\{\eta\in\Ga_0 \mid x\in\eta\}\bigr)
    =\la\bigl(\{\eta\in\Ga_0 \mid \xi\cap\eta\neq\emptyset\}\bigr)=0.
\end{equation}

Let us consider some classes of real-valued functions on~$\Ga_0$. In the sequel, a measurable function on~$\Ga_0$ will always mean a $\B(\Ga_0)/\B(\R)$-measurable function. Let $L^0(\Ga_0)$ denote the class of all measurable functions on~$\Ga_0$.
By the expansion \eqref{Ga0Y}, any function
$G\in L^0(\Ga_0)$ might be given by the system of its restrictions $G^{(n)}:=G\upharpoonright_{\Ga^{(n)}}$.
For a symmetric function
$G^{(n)}\circ\mathrm{sym}_{{X},n}^{-1}:\widetilde{({X})^n}\to\R$ we stand the same notation $G^{(n)}$ if this does not lead to misunderstanding.
A function~$G\in L^0(\Ga_0)$ is said to have a local support if there exists $\La\in\Bb$ such that $G\upharpoonright_{\Ga_0\setminus \Ga_{0,\La}}=0$.
Let $L_\ls^0(\Ga_0)$ denote the set of all measurable functions on $\Ga_0$ with local supports. Similarly, a function $G\in L^0(\Ga_0)$ is said to have a bounded support if there exists $B\in\Bbg$ such that
 $G\upharpoonright_{\Ga_0\setminus B}=0$.
Let $\Bbs$ denote the set of all bounded measurable functions on $\Ga_0$.

For any $\B({X})$-measurable function $f:{X}\to\R$ we define the Lebesgue--Poisson exponent as the function on $\Ga_0$, given by:
\begin{equation}\label{defexpLP}
e_\la(f,\eta):=\prod\limits_{x\in\eta}f(x),\quad
 \eta\in\Ga_0\setminus\{\emptyset\},\qquad
e_\la(f,\emptyset):=1.
\end{equation}

\section{$*$-calculus}

\begin{definition}[{see e.g. \cite{Rue1964}}]
For any $G_1,G_2\in L^0(\Ga_0)$, we define the following convolution rule on
$\Ga_0$:
\begin{equation}\label{ast}
 (G_1* G_2)(\eta):=\sum_{\xi\subset\eta}
 G_1(\xi)\,G_2(\eta\setminus\xi).
\end{equation}
\end{definition}
In particular, for any measurable $f,g:{X}\to\R$, one get
\begin{equation}
e_\lambda(f) * e_\lambda(g) = e_\lambda(f+g),  \label{addexp}
\end{equation}
taking into account \eqref{defexpLP} and \eqref{ast}.

\begin{proposition}[{see e.g. \cite{Kun1999}}]\label{Minlos-ast}
For any $H,G_1,G_2\in L^0(\Ga_0)$, the following identity holds true
\begin{equation}\label{minlosid-ast}
\int_{\Ga_0}H(\eta)(G_1\ast G_2)(\eta)d\la(\eta) =
\int_{\Ga_0}\int_{\Ga_{0}}H(\eta\cup\xi)G_1(\eta)G_2(\xi)d\la(\xi)d\la(\eta),
\end{equation}
if at least one of integrals is well-defined.
\end{proposition}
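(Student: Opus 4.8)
The plan is to reduce everything to the level-wise decomposition of the Lebesgue--Poisson integral and then carry out a purely combinatorial rearrangement. Recall that by \eqref{la_z} (with $z=1$) and the definition of $m^{(n)}$, every integral against $\la$ unfolds as
\[
\int_{\Ga_0}F(\eta)\,d\la(\eta)=\sum_{n=0}^\infty\frac1{n!}\int_{X^n}F\bigl(\{x_1,\dots,x_n\}\bigr)\,dm^{\otimes n}(x_1,\dots,x_n),
\]
where the $n=0$ term is read as $F(\emptyset)$ and each integrand is understood through the identification of symmetric functions on $\widetilde{X^n}$ with functions on $\Ga^{(n)}$. Since the identity is asserted only under the proviso that one side be well defined, I would first prove it for nonnegative $H,G_1,G_2$, where all quantities live in $[0,\infty]$ and Tonelli's theorem legitimises every interchange of sum and integral without further hypotheses; the general case then follows by splitting each function into positive and negative parts.

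For nonnegative functions I would insert the definition \eqref{ast} of $G_1*G_2$ into the left-hand side and apply the level-wise formula, obtaining
\[
\sum_{n=0}^\infty\frac1{n!}\int_{X^n}H\bigl(\{x_1,\dots,x_n\}\bigr)\sum_{\xi\subseteq\{x_1,\dots,x_n\}}G_1(\xi)\,G_2\bigl(\{x_1,\dots,x_n\}\setminus\xi\bigr)\,dm^{\otimes n}.
\]
The key step is to evaluate the inner sum over subsets. Grouping the subsets $\xi$ by their cardinality $k$ and using that $H$, $G_1$ and $G_2$ are symmetric in their arguments, so that the integral is unchanged under any permutation of $(x_1,\dots,x_n)$, each of the $\binom nk$ subsets of size $k$ contributes the same amount as the representative subset $\{x_1,\dots,x_k\}$. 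Hence the inner sum becomes $\sum_{k=0}^n\binom nk\,G_1(\{x_1,\dots,x_k\})\,G_2(\{x_{k+1},\dots,x_n\})$, and the factor $\frac1{n!}\binom nk=\frac1{k!\,(n-k)!}$ decouples the two blocks of variables.

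It remains to reindex the resulting double series. Writing $l=n-k$ and summing freely over $k,l\ge0$ (again justified by nonnegativity), the expression factorises as
\[
\sum_{k,l\ge0}\frac1{k!\,l!}\int_{X^{k}}\int_{X^{l}}H\bigl(\{x_1,\dots,x_k\}\cup\{y_1,\dots,y_l\}\bigr)\,G_1(\{x_1,\dots,x_k\})\,G_2(\{y_1,\dots,y_l\})\,dm^{\otimes l}\,dm^{\otimes k},
\]
which is precisely the right-hand side of \eqref{minlosid-ast} after reassembling the two level-wise sums into integrals over $\Ga_0$. I expect the principal technical obstacle to be bookkeeping rather than analysis: one must make the symmetrisation argument precise (identifying symmetric functions on $\widetilde{X^n}$ with functions on $\Ga^{(n)}$) and check that the set where the configurations $\{x_1,\dots,x_k\}$ and $\{y_1,\dots,y_l\}$ overlap is negligible, so that their union is genuinely a configuration of cardinality $k+l$ for $m^{\otimes(k+l)}$-almost every choice of points; this is guaranteed by the vanishing of $m^{\otimes n}$ on the coincidence set (equivalently, by \eqref{zeroset1}). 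Once the nonnegative case is secured, extending to the stated generality under the ``one side well defined'' hypothesis is routine via the positive/negative decomposition.
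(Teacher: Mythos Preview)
Your argument is correct and is in fact the standard proof of this identity. Note, however, that the paper does not supply its own proof of Proposition~\ref{Minlos-ast}: it is stated with the attribution ``see e.g.\ \cite{Kun1999}'' and used as a known tool, so there is no in-text proof to compare against. Your level-wise expansion of $\la$, the symmetrisation that collapses the $\binom nk$ subsets of a given size to a single representative, and the reindexing $n=k+l$ are exactly the steps one finds in the cited source; the appeal to the non-atomicity of $m$ (equivalently \eqref{zeroset1}) to dispose of coincidences is likewise the right justification for writing $|\eta\cup\xi|=|\eta|+|\xi|$ almost everywhere. The only place to be slightly more explicit is the passage from the nonnegative case to the general one: ``one side well defined'' should be read as ``the integrand on one side has a well-defined integral,'' and you should observe that replacing $H,G_1,G_2$ by $|H|,|G_1|,|G_2|$ dominates both sides, so the nonnegative identity gives simultaneous finiteness (or simultaneous $+\infty$) of the absolute versions, after which the signed decomposition goes through.
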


Let $C>0$ and~$\delta\geq 0$. We consider a Banach space
\begin{equation*}
\K_{C,\,\delta} = \bigl\{
k:\Ga_0\to\R \bigm| |k(\eta)|\leq \mathrm{const}\cdot C^{|\eta|}
(|\eta| !)^\delta \text{ for } \la\text{-a.a. } \eta\in\Ga_0 \bigr\}
\end{equation*}
with norm $\|k\|_{{C,\,\delta}}:=\esssup_{\eta\in\Ga_0}
\dfrac{|k(\eta)|}{C^{|\eta|} (|\eta| !)^\delta}$.
Clearly, for any $C'\geq C$, $\delta'\geq\delta$, the following inclusion holds
$\K_{C,\,\delta}\subset\K_{C',\,\delta'}$. For $\delta=0$, we will omit this subscript, namely, $\K_C:=\K_{C,0}$.

\begin{proposition}\label{convest}
Let $C_1, C_2>0$, $\delta_1, \delta_2\geq 0$, and
$k_i\in\K_{C_i,\,\delta_i}$, $i=1,2$. Then the function $k:=k_1\ast k_2$ belongs to the space $\K_{C,\,\delta}$, where~$C=C_1+C_2$,
$\delta=\max\{\delta_1,\,\delta_2\}$. Moreover, the Young-type inequality holds
\begin{equation}\label{Young1}
\|k_1\ast k_2\|_{C,\,\delta}\leq \|k_1\|_{C_1,\,\delta_1} \cdot
\|k_2\|_{C_2,\,\delta_2}.
\end{equation}
If $\delta\geq1$, $C_1\neq C_2$ then the function $k_1\ast k_2$ belongs to a more narrow
space $\K_{\bar{C},\,\delta}$, where
$\bar{C}=\max\{C_1,\,C_2\}$, and the corresponding inequality holds
\begin{equation}\label{Young2}
\|k_1\ast k_2\|_{\bar{C},\,\delta}\leq \frac{\bar{C}}{\bigl\vert C_1-C_2\bigr\vert}\|k_1\|_{C_1,\,\delta_1} \cdot
\|k_2\|_{C_2,\,\delta_2}.
\end{equation}
If $\delta\geq1$, $C_1=C_2$ then the function $k_1\ast k_2$ belongs to the space
$\K_{C',\,\delta}$ for any $C'>C_1$, moreover,
\begin{equation}\label{Young3}
\|k_1\ast k_2\|_{C',\,\delta}\leq \frac{C'}{eC_1\ln\frac{C'}{C_1}}\|k_1\|_{C_1,\,\delta_1} \cdot
\|k_2\|_{C_2,\,\delta_2}.
\end{equation}
If $k_1\in\K_{C_1,\delta_1}$, $C_1>1$, $\delta_1\geq 1$, and $k_2\in L^\infty(\Ga_0):=L^\infty(\Ga_0,d\la)$, then~$k_1*k_2\in \K_{C_1,\delta_1}$ and
\begin{equation}\label{Young4}
\|k_1\ast k_2\|_{C_1,\,\delta_1}\leq \frac{C_1}{C_1-1}\|k_1\|_{C_1,\,\delta_1} \cdot
\|k_2\|_{L^\infty(\Ga_0)}.
\end{equation}
Finally, if $k_1,k_2\in L^\infty(\Ga_0)$, then~$k_1*k_2\in\K_{C,0}$ for all $C\geq2$ and $k_1*k_2\in\K_{C,\delta}$ for all $\delta>0$, $C>0$, in particular,
\begin{equation}\label{Young5}
    \|k_1\ast k_2\|_{C,0}\leq \|k_1\|_{L^\infty(\Ga_0)}\|k_2\|_{L^\infty(\Ga_0)}, \quad C\geq2.
\end{equation}
\end{proposition}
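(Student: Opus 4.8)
The plan is to reduce all five estimates to bounding a single combinatorial sum and then to treat each regime by choosing the most economical way to absorb the factorials. First I would start from the pointwise definition \eqref{ast}, apply the triangle inequality, and insert the defining bounds $|k_i(\xi)|\le\|k_i\|_{C_i,\delta_i}\,C_i^{|\xi|}(|\xi|!)^{\delta_i}$. Since for a fixed $\eta$ with $|\eta|=n$ there are exactly $\binom{n}{j}$ subconfigurations $\xi\subset\eta$ with $|\xi|=j$, every estimate is controlled $\la$-a.e.\ by
\[
|(k_1\ast k_2)(\eta)|\le\|k_1\|_{C_1,\delta_1}\|k_2\|_{C_2,\delta_2}\,S_n,\qquad S_n:=\sum_{j=0}^n\binom{n}{j}C_1^j(j!)^{\delta_1}C_2^{n-j}((n-j)!)^{\delta_2}.
\]
The whole proposition then becomes a collection of upper bounds for $S_n$, each compared with the weight $C^{|\eta|}(|\eta|!)^\delta$ of the target space. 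For \eqref{Young1} I would replace both exponents by $\delta=\max\{\delta_1,\delta_2\}$ (legitimate because $j!,(n-j)!\ge1$), use $j!(n-j)!\le n!$ to factor out $(n!)^\delta$, and close with the binomial theorem:
\[
S_n\le(n!)^\delta\sum_{j=0}^n\binom{n}{j}C_1^jC_2^{n-j}=(n!)^\delta(C_1+C_2)^n,
\]
which is exactly the weight of $\K_{C,\delta}$ with $C=C_1+C_2$.

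The sharper estimates \eqref{Young2} and \eqref{Young3} exploit $\delta\ge1$. Here I would first pull a full $n!$ out of $\binom{n}{j}$, leaving $(j!)^{\delta-1}((n-j)!)^{\delta-1}$, and bound this by $(n!)^{\delta-1}$ (again via $j!(n-j)!\le n!$ and $\delta-1\ge0$). What survives is the purely exponential sum $\sum_{j=0}^nC_1^jC_2^{n-j}$ multiplied by $(n!)^\delta$. For $C_1\ne C_2$ this is the geometric series $\frac{C_1^{n+1}-C_2^{n+1}}{C_1-C_2}$, which I would bound above by $\frac{\bar C}{|C_1-C_2|}\bar C^{\,n}$ with $\bar C=\max\{C_1,C_2\}$, yielding \eqref{Young2}. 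For $C_1=C_2=:C$ the sum collapses to $(n+1)C^n$, so passing to $\K_{C',\delta}$ with $C'>C$ reduces \eqref{Young3} to evaluating $\sup_{n\ge0}(n+1)(C/C')^n$; bounding this discrete supremum by the continuous one and maximizing $(x+1)t^x$ over $x\ge0$ with $t=C/C'$ produces the constant $\frac{C'}{eC\ln(C'/C)}$. The degenerate case $t<1/e$, where the maximizer is negative, is dispatched by checking that this constant still exceeds $1$.

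For \eqref{Young4} the factor $k_2\in L^\infty(\Ga_0)$ contributes only $\|k_2\|_{L^\infty(\Ga_0)}$, so I must bound $\sum_{j=0}^n\binom{n}{j}C_1^j(j!)^{\delta_1}$ by $\frac{C_1}{C_1-1}C_1^n(n!)^{\delta_1}$. The efficient route is to reindex in decreasing order of $j$, writing $b_i$ for the term with $j=n-i$ so that $b_0=C_1^n(n!)^{\delta_1}$ is precisely the leading weight, and to compute the ratio $b_i/b_{i-1}=\bigl(iC_1(n-i+1)^{\delta_1-1}\bigr)^{-1}\le 1/C_1$, where $\delta_1\ge1$ and $C_1>1$ are used. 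Summing the geometric majorant $\sum_{i\ge0}C_1^{-i}=\frac{C_1}{C_1-1}$ gives \eqref{Young4}. Finally \eqref{Young5} is the simplest: with $k_1,k_2\in L^\infty(\Ga_0)$ the sum degenerates to $\sum_{\xi\subset\eta}1=2^{|\eta|}$, so $k_1\ast k_2\in\K_{2,0}$ with norm at most $\|k_1\|_{L^\infty(\Ga_0)}\|k_2\|_{L^\infty(\Ga_0)}$; the embedding $\K_{2,0}\subset\K_{C,0}$ for $C\ge2$ and the super-exponential growth of $(n!)^\delta$ for any $\delta>0$ settle the remaining assertions. I expect the only genuinely delicate points to be the two constant-sharpening steps — the calculus optimization behind \eqref{Young3} and the monotone-ratio argument behind \eqref{Young4} — since both require the hypotheses $\delta\ge1$ and $C_1>1$ to be used in exactly the right place; everything else is routine once the reduction to $S_n$ is in hand.
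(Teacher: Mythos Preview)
Your proposal is correct and follows essentially the same route as the paper: reduce to the combinatorial sum $S_n=\sum_j\binom{n}{j}C_1^j(j!)^{\delta_1}C_2^{n-j}((n-j)!)^{\delta_2}$, use $j!(n-j)!\le n!$ together with the binomial theorem for \eqref{Young1}, kill the binomial coefficient via $\delta\ge1$ and evaluate the geometric/arithmetic sum for \eqref{Young2}--\eqref{Young3}, and use $\sum_{\xi\subset\eta}1=2^{|\eta|}$ for \eqref{Young5}. The one place you deviate slightly is \eqref{Young4}: the paper bounds each term directly by observing $(n!)^{-\delta_1}\binom{n}{k}(k!)^{\delta_1}\le1$ (from $k!\le n!$ and $\delta_1\ge1$) and then sums $C_1^{-n}\sum_{k=0}^nC_1^k=\frac{C_1-C_1^{-n}}{C_1-1}\le\frac{C_1}{C_1-1}$, whereas you reach the same constant via the ratio test $b_i/b_{i-1}\le C_1^{-1}$; both arguments are equally short and yield the identical bound.
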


\begin{proof}
For $\la$-a.a. $\eta\in\Ga_0$, one has
\begin{align}
&C^{-|\eta|}(|\eta|!)^{-\delta}|k(\eta)| \leq
C^{-|\eta|}(|\eta|!)^{-\delta}\sum_{\xi\subset\eta}
|k_1(\xi)| |k_2(\eta\setminus\xi)| \notag\\
&= C^{-|\eta|}(|\eta|!)^{-\delta}\sum_{\xi\subset\eta}
C_1^{|\xi|}(|\xi|!)^{\delta_1}
\frac{|k_1(\xi)|}{C_1^{|\xi|}(|\xi|!)^{\delta_1}}
\frac{|k_2(\xi)|}{C_2^{|\eta\setminus\xi|}(|\eta\setminus\xi|!)^{\delta_2}} C_2^{|\eta\setminus\xi|}(|\eta\setminus\xi|!)^{\delta_2}\notag\\
&\leq \|k_1\|_{C_1,\,\delta_1} \|k_2\|_{C_2,\,\delta_2} C^{-|\eta|}
\sum_{k=0}^{|\eta|}\frac{|\eta|!}{k! (|\eta|-k)!}
\frac{C_1^{k}(k!)^{\delta_1}}{(|\eta|!)^{\delta}}
C_2^{|\eta|-k}((|\eta|-k)!)^{\delta_2}\notag\\
&\leq \|k_1\|_{C_1,\,\delta_1} \|k_2\|_{C_2,\,\delta_2}C^{-|\eta|}
\sum_{k=0}^{|\eta|}\left(\frac{|\eta|!}{k!
(|\eta|-k)!}\right)^{1-\delta} C_1^{k}C_2^{|\eta|-k}\label{sepsa}
\\
&\leq \|k_1\|_{C_1,\,\delta_1} \|k_2\|_{C_2,\,\delta_2}C^{-|\eta|}
\sum_{k=0}^{|\eta|}\frac{|\eta|!}{k! (|\eta|-k)!} C_1^{k}C_2^{|\eta|-k}\notag \\
&=\|k_1\|_{C_1,\,\delta_1}\|k_2\|_{C_2,\,\delta_2},\notag
\end{align}
which completes the proof of the first statement.

Let now $\delta\geq1$. Then, by~\eqref{sepsa}, we derive
\begin{align*}
\bar{C}^{-|\eta|}(|\eta|!)^{-\delta}|k(\eta)| &\leq  \|k_1\|_{C_1,\,\delta_1} \|k_2\|_{C_2,\,\delta_2}C^{-|\eta|}
\sum_{k=0}^{|\eta|} C_1^{k}C_2^{|\eta|-k}\\&=\|k_1\|_{C_1,\,\delta_1} \|k_2\|_{C_2,\,\delta_2}\bar{C}^{-|\eta|} \frac{C_1^{|\eta|+1}-C_2^{|\eta|+1}}{C_1-C_2}.
\end{align*}
For definiteness, consider $\bar{C}=\max\{C_1,C_2\}=C_1$. Then
\[
\esssup_{\eta\in\Ga_0}C^{-|\eta|} \frac{C_1^{|\eta|+1}-C_2^{|\eta|+1}}{C_1-C_2}=
\esssup_{\eta\in\Ga_0}\frac{C_1-\bigl(\frac{C_2}{C_1}\bigr)^{|\eta|}C_2}{C_1-C_2}=\frac{C_1}{C_1-C_2}
\]
and the second statement is fulfilled.

In the case when~$C_1=C_2$, we have
\begin{align*}
(C')^{-|\eta|}(|\eta|!)^{-\delta}|k(\eta)| &\leq  \|k_1\|_{C_1,\,\delta_1} \|k_2\|_{C_2,\,\delta_2}(C')^{-|\eta|}
\sum_{k=0}^{|\eta|} C_1^{|\eta|}\\&=\|k_1\|_{C_1,\,\delta_1} \|k_2\|_{C_2,\,\delta_2} \Bigl(\frac{C_1}{C'}\Bigr)^{|\eta|}|(\eta|+1),
\end{align*}
and the result is followed by properties of the elementary function:
\[
\max_{x\geq1}(x+1)a^x=-\frac{1}{ae\ln a}, \quad a\in(0; 1).
\]

Let now $k_2\in L^\infty(\Ga_0)$. Then
\begin{align*}
&  C_1^{-|\eta|}(|\eta|!)^{-\delta_1}\sum_{\xi\subset\eta}
|k_1(\xi)| |k_2(\eta\setminus\xi)|\leq \|k_2\|_{L^\infty(\Ga_0)} \|k_1\|_{{C_1,\,\delta_1}} C_1^{-|\eta|}(|\eta|!)^{-\delta_1}\sum_{\xi\subset\eta}
C_1^{|\xi|}(|\xi|!)^{\delta_1}\\&=\|k_2\|_{L^\infty(\Ga_0)} \|k_1\|_{{C_1,\,\delta_1}} C_1^{-|\eta|}(|\eta|!)^{-\delta_1}\sum_{k=0}^{|\eta|}\frac{|\eta|!}{k!(|\eta|-k)!}
C_1^{k}(k!)^{\delta_1}\\&\leq\|k_2\|_{L^\infty(\Ga_0)} \|k_1\|_{{C_1,\,\delta_1}} C_1^{-|\eta|}\sum_{k=0}^{|\eta|}C_1^{k}=\|k_2\|_{L^\infty(\Ga_0)} \|k_1\|_{{C_1,\,\delta_1}} \frac{C_1-C_1^{-|\eta|}}{C_1-1},
\end{align*}
and for $C_1>1$ one has:
\[
\esssup_{\eta\in\Ga_0}\dfrac{C_1-C_1^{-|\eta|}}{C_1-1}=
\dfrac{C_1}{C_1-1},
\]
that proves \eqref{Young4}.

The last statement is followed by the equalities $\sum_{\xi\subset\eta}1=2^{|\eta|}$ and $\esssup_{\eta\in\Ga_0}\bigl(\frac{2}{C}\bigr)^{|\eta|}=1$ if only $C\geq2$.
\end{proof}

\begin{corollary}\label{cor1}
Let $k\in\K_{C,\delta}$, $C>0$, $\delta\geq0$. Then, for $\delta\in[0;1)$,
$k^{*n}\in\K_{nC,\delta}$, $n\in\N$ and $\|k^{*n}\|_{{nC,\delta}}\leq \|k\|_{{C,\delta}}^n$. In the case when $\delta\geq 1$, we have, for any $C'>C$,
$k^{*n}\in\K_{C',\delta}$, $n\geq2$, and
\[
\|k^{*n}\|_{{C',\delta}}\leq \biggl(\frac{C'}{C'-C}\biggr)^{n-2}\frac{C'}{eC\ln\frac{C'}{C}}\|k\|_{{C,\delta}}^n, \quad n\geq 2.
\]
Finally, if $k\in L^\infty(\Ga_0)$ then $k^{*n}\in\K_{C,0}$ for all $C\geq 2$, $n\in\N$, and
\[
\|k^{*n}\|_{{C,0}}\leq \biggl(\frac{C}{C-1}\biggr)^{n-2}\|k\|_{L^\infty(\Ga_0)}^n, \quad n\geq 2.
\]
\end{corollary}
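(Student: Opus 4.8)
The plan is to establish all three regimes by induction on $n$, reducing each inductive step to a single application of the appropriate Young-type estimate of Proposition~\ref{convest} to the splitting $k^{*n}=k^{*(n-1)}*k$. The three cases of the corollary correspond exactly to the three regimes of the proposition ($\delta\in[0;1)$, $\delta\geq1$, and $k\in L^\infty(\Ga_0)$), and the exponents $n-2$ appearing in the last two bounds indicate that, after a base case at $n=2$, one performs $n-2$ further convolutions against a single copy of $k$.

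For $\delta\in[0;1)$ I would argue using only \eqref{Young1}. The case $n=1$ is trivial, and assuming $k^{*(n-1)}\in\K_{(n-1)C,\delta}$ with $\|k^{*(n-1)}\|_{(n-1)C,\delta}\leq\|k\|_{C,\delta}^{n-1}$, I apply \eqref{Young1} with $k_1=k^{*(n-1)}$ and $k_2=k\in\K_{C,\delta}$: since $\max\{\delta,\delta\}=\delta$ the resulting constant is $(n-1)C+C=nC$ and the norms multiply, giving $\|k^{*n}\|_{nC,\delta}\leq\|k\|_{C,\delta}^{n}$. For $\delta\geq1$ I would take $n=2$ as the base case, handled by the equal-constants estimate \eqref{Young3} (with $C_1=C_2=C$), which produces the factor $\frac{C'}{eC\ln(C'/C)}$ and matches the claim since $\bigl(\tfrac{C'}{C'-C}\bigr)^{0}=1$. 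For the step $k^{*n}=k^{*(n-1)}*k$ I would use the unequal-constants estimate \eqref{Young2}: here $k^{*(n-1)}\in\K_{C',\delta}$ and $k\in\K_{C,\delta}$ with $C'>C$, so $\bar C=\max\{C',C\}=C'$, and each step contributes the factor $\frac{C'}{|C'-C|}=\frac{C'}{C'-C}$ together with one power of $\|k\|_{C,\delta}$; iterating $n-2$ times from the base case reproduces the stated inequality.

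The delicate regime, and the one I expect to be the main obstacle, is the $L^\infty$ case with $\delta=0$. The base case $n=2$ is immediate from \eqref{Young5}, giving $\|k^{*2}\|_{C,0}\leq\|k\|_{L^\infty(\Ga_0)}^2$ for $C\geq2$, consistent with $n-2=0$. The natural continuation is to write $k^{*n}=k^{*(n-1)}*k$ with $k\in L^\infty(\Ga_0)$ and invoke \eqref{Young4}, which is precisely the estimate supplying the factor $\frac{C}{C-1}$ at each step. The obstruction is that \eqref{Young4} is stated under the hypothesis $\delta_1\geq1$, whereas here $\delta_1=0$. Concretely, the reduction in the proof of \eqref{Young4} relies on the per-term bound $(|\eta|!)^{-\delta_1}\frac{|\eta|!}{k!\,(|\eta|-k)!}(k!)^{\delta_1}\leq1$, which holds for $\delta_1\geq1$ but fails for $\delta_1=0$; in the latter case the factorial weights disappear, the binomial coefficients survive, and the relevant sum $\sum_{\xi\subset\eta}C^{|\xi|}=(1+C)^{|\eta|}$ grows like $\bigl(\tfrac{1+C}{C}\bigr)^{|\eta|}$ after division by $C^{|\eta|}$ rather than remaining bounded.

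Consequently, before asserting $k^{*n}\in\K_{C,0}$ for a fixed $C\geq2$ and all $n$, I would carefully re-examine this inductive step rather than apply \eqref{Young4} verbatim. A sanity check I would run first is the constant function $k\equiv\mathrm{const}$, for which $k^{*n}(\eta)=\mathrm{const}^{\,n}\,n^{|\eta|}$, so that membership in $\K_{C,0}$ forces $C\geq n$; this suggests that the admissible constant must in fact grow with $n$, and that either the iteration mechanism or the hypotheses of the final statement need to be adjusted. Resolving this tension is the crux of the argument; the remaining two cases go through by the clean inductions described above.
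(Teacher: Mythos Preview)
Your inductive scheme for the first two regimes is exactly the intended argument: the paper states the corollary without proof, as a direct consequence of Proposition~\ref{convest}, and your iterations via \eqref{Young1} (for $\delta\in[0;1)$) and via \eqref{Young3} followed by \eqref{Young2} (for $\delta\geq1$) reproduce the stated bounds cleanly.

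For the third regime you have not missed an idea; you have uncovered an error in the paper. Your diagnosis is correct on both counts. First, the inductive step cannot be carried by \eqref{Young4}, since that inequality is proved under the hypothesis $\delta_1\geq1$; the key reduction in its proof is the bound
\[
(|\eta|!)^{-\delta_1}\,\frac{|\eta|!}{k!\,(|\eta|-k)!}\,(k!)^{\delta_1}
=\Bigl(\tfrac{k!}{|\eta|!}\Bigr)^{\delta_1-1}\frac{1}{(|\eta|-k)!}\leq 1,
\]
which fails for $\delta_1=0$ (it then equals the binomial coefficient). Second, and decisively, your sanity check with $k\equiv 1\in L^\infty(\Ga_0)$ is a genuine counterexample: by \eqref{addexp} one has $k^{*n}=e_\la(n)$, i.e.\ $k^{*n}(\eta)=n^{|\eta|}$, so $k^{*n}\in\K_{C,0}$ forces $C\geq n$. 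Hence the assertion ``$k^{*n}\in\K_{C,0}$ for all $C\geq2$, $n\in\N$'' with the accompanying bound is false as stated, and the subsequent use of this bound for $\exp^*(ta)$ with $a\in L^\infty(\Ga_0)$ inherits the same defect. What \emph{does} follow from Proposition~\ref{convest} in this case is only the first-regime conclusion with $\delta=0$ (since $L^\infty(\Ga_0)=\K_{1,0}$), namely $k^{*n}\in\K_{n,0}$ with $\|k^{*n}\|_{n,0}\leq\|k\|_{L^\infty(\Ga_0)}^n$, or alternatively $k^{*n}\in\K_{C,\delta}$ for any fixed $C>1$ and $\delta\geq1$ via \eqref{Young4}.
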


The sequel results of this Section is somehow ``folk art''. They either are given in literature without proof like in \cite{Rue1964} or they can be derived from some informal general considerations like in \cite{She1971,She1972,She1973}. Hence, for the convenience of the reader, we present all these results with detailed proofs.

For an arbitrary $c\in\R$, we consider the set ${\mathcal{I}}_{c}$ of all measurable functions on~$\Ga_0$, such that~$u(\emptyset)=c$. Since $(u_1\ast
u_2)(\emptyset)=u_1(\emptyset)u_2(\emptyset)$, the set ${\mathcal{I}}_0$ is an ideal in the algebra $L^0(\Ga_0)$ with a product $\ast$. A unit in this algebra is the function
\[
u^{\ast 0}(\eta):=1^\ast(\eta):=0^{|\eta|}.
\]

For any $u\in L^0(\Ga_0)$ and $n\in{\N}$, one has
\begin{equation*}
u^{\ast n}(\eta)=(\underbrace{u\ast\ldots\ast u}_n)(\eta)=\sum_{\eta_1%
\sqcup\ldots\sqcup\eta_n=\eta}u(\eta_1)\ldots u(\eta_n),\quad \eta\in\Gamma_0,
\end{equation*}
therefore, for $u\in{\mathcal{I}}_{0}$, we get
\begin{equation*}
u^{\ast n} (\eta)=0,\quad n>|\eta|.
\end{equation*}
Hence, for any smooth function $f:{\mathbb{R}}\rightarrow{\mathbb{R}}$ with a Taylor expansion in some domain $D\subset\R$, given by
\begin{equation*}
f(t)=\sum_{n=0}^\infty a_n t^n, \quad t\in D,
\end{equation*}
one can define, for any $u\in{\mathcal{I}}_{0}$ with $u(\Ga_0)\subset D$, the following function on~$\Ga_0$
\begin{equation}  \label{series}
(f^\ast u)(\eta):=\sum_{n=0}^\infty a_n u^{\ast n}(\eta),~\quad
\eta\in\Gamma_0.
\end{equation}
The latter series is finite for all $\eta\in\Gamma_0$.
It is worth noting that, $(f^\ast u)(\emptyset)=a_0$.

In particular, taking $f(t)=e^t$, one can consider, for all  $u\in{\mathcal{I}}_{0}$, the following expression
\begin{equation}  \label{defastexp}
\exp^\ast u(\eta):=\sum_{n=0}^\infty \frac{1}{n!}u^{\ast n}
(\eta)=1^{\ast}(\eta)+\sum_{\bigsqcup\limits_{i}\eta_i=\eta}\prod_i
u(\eta_i),
\end{equation}
where the sum is taking over all partitions of $\eta$ on nonempty sets. Clearly, $k:=\exp^\ast u\in{\mathcal{I}}_1$. The function $u$ is said to be a cumulant of the function $k$.

For any $k\in{\mathcal{I}}_1$, one can consider the function $\bar{k}=k-1^\ast\in{\mathcal{I}}_0$. Then, if we only know that $f:{\mathbb{R}}%
\rightarrow{\mathbb{R}}$ has an expansion
\begin{equation*}
f(1+t)=\sum_{n=0}^\infty a_n t^n, \quad t\in D\subset{\mathbb{R}},
\end{equation*}
we may define
\begin{equation*}
(f^\ast k)(\eta):=\sum_{n=0}^\infty a_n \bar{k}^{\ast n}(\eta),~\quad
\eta\in\Gamma_0.
\end{equation*}
It should be noted again, that, for any $\eta\in\Ga_0$, the latter series is just a finite sum.

The following two examples of such function will be the mostly important for us.
\begin{proposition}\label{inverseast}
Let $k\in{\mathcal{I}}_1$, then there exists the function
\begin{equation}  \label{inverse}
k^{\ast \, -1}(\eta):=\sum_{n=0}^\infty (-1)^n\bar{k}^{\ast n}(\eta), \quad
\eta\in\Gamma_0,
\end{equation}
such that~$k^{\ast\,-1}\in{\mathcal{I}}_1$ and
\begin{equation*}
k\ast k^{\ast \,-1}=1^\ast.
\end{equation*}
\end{proposition}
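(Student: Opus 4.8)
The plan is to exploit the ``nilpotency'' of elements of $\mathcal{I}_0$ under $\ast$, which turns every series into a finite sum and reduces the identity to a telescoping computation. First I would set $\bar{k} = k - 1^\ast$ and note that, since $k(\emptyset) = 1$ and $1^\ast(\emptyset) = 0^0 = 1$, we have $\bar{k}(\emptyset) = 0$, so $\bar{k} \in \mathcal{I}_0$. By the observation recorded just before the proposition, $\bar{k}^{\ast n}(\eta) = 0$ whenever $n > |\eta|$; hence for each fixed $\eta$ the defining series \eqref{inverse} reduces to the finite sum $\sum_{n=0}^{|\eta|} (-1)^n \bar{k}^{\ast n}(\eta)$. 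This shows at once that $k^{\ast\,-1}$ is everywhere defined and measurable, being a finite combination of the measurable functions $\bar{k}^{\ast n}$.

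Next I would verify membership in $\mathcal{I}_1$. Evaluating at $\eta = \emptyset$, only the $n = 0$ term survives, because $\bar{k}^{\ast n}(\emptyset) = \bar{k}(\emptyset)^n = 0$ for $n \geq 1$, whereas $\bar{k}^{\ast 0} = 1^\ast$ gives $1^\ast(\emptyset) = 1$. Thus $k^{\ast\,-1}(\emptyset) = 1$, i.e.\ $k^{\ast\,-1} \in \mathcal{I}_1$.

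For the main identity I would write $k = 1^\ast + \bar{k}$ and use that $1^\ast$ is the unit for $\ast$ (so $1^\ast \ast G = G$), together with associativity of $\ast$, which is immediate from the symmetric expression $(G_1 \ast G_2 \ast G_3)(\eta) = \sum_{\eta_1 \sqcup \eta_2 \sqcup \eta_3 = \eta} G_1(\eta_1) G_2(\eta_2) G_3(\eta_3)$. Fixing $\eta$, every occurring sum is finite, so I may distribute the convolution over the series and reindex freely:
\begin{align*}
(k \ast k^{\ast\,-1})(\eta) &= k^{\ast\,-1}(\eta) + (\bar{k}\ast k^{\ast\,-1})(\eta) = \sum_{n=0}^\infty (-1)^n \bar{k}^{\ast n}(\eta) + \sum_{n=0}^\infty (-1)^n \bar{k}^{\ast (n+1)}(\eta) \\
&= \sum_{n=0}^\infty (-1)^n \bar{k}^{\ast n}(\eta) - \sum_{m=1}^\infty (-1)^m \bar{k}^{\ast m}(\eta) = \bar{k}^{\ast 0}(\eta) = 1^\ast(\eta).
\end{align*}
The alternating series telescopes to the single surviving term $\bar{k}^{\ast 0} = 1^\ast$, which is exactly the claim.

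I do not expect a serious obstacle. The only point requiring care is the justification that the formal power-series manipulation is legitimate, and this is handled once and for all by the finiteness $\bar{k}^{\ast n}(\eta) = 0$ for $n > |\eta|$, which turns every series into a finite sum at each $\eta$ and removes all convergence and rearrangement issues. If one prefers to avoid even the appearance of infinite series, the same computation can be carried out with the truncated sums $\sum_{n=0}^{N}$ for $N = |\eta|$, which is what I would do in the final write-up.
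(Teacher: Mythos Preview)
Your proof is correct and follows essentially the same route as the paper: split $k = 1^\ast + \bar{k}$, distribute the convolution over the defining series, and telescope to the surviving $n=0$ term. Your justification of the finiteness of the sums via $\bar{k}\in\mathcal{I}_0$ is a bit more explicit than the paper's, but the argument is the same.
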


\begin{proof}
The inclusion $k^{\ast\,-1}\in\I_1$ is followed from \eqref{inverse} directly. Next,
\begin{align*}
\left( k\ast k^{\ast \,-1}\right) \left( \eta \right) &=\sum_{\xi \sqcup
\zeta =\eta}k\left( \xi \right) k^{\ast \,-1}\left( \zeta \right)  =\sum_{\xi \sqcup \zeta =\eta}k\left( \xi \right)
\sum_{n=0}^{\infty
}\left( -1\right) ^{n}\bar{k}^{\ast \,n}\left( \zeta \right)  \\
&=\sum_{\xi \sqcup \zeta =\eta}1^{\ast}\left( \xi \right)
\sum_{n=0}^{\infty}\left( -1\right) ^{n}\bar{k}^{\ast \,n}\left(
\zeta \right) +\sum_{\xi \sqcup \zeta =\eta}\bar{k}\left( \xi
\right) \sum_{n=0}^{\infty}\left( -1\right) ^{n}\bar{k}^{\ast
\,n}\left( \zeta
\right)  \\
&=\sum_{n=0}^{\infty}\left( -1\right) ^{n}\bar{k}^{\ast \,n}\left(
\eta \right) +\sum_{n=0}^{\infty}\left( -1\right) ^{n}\sum_{\xi
\sqcup \zeta
=\eta}\bar{k}\left( \xi \right) \bar{k}^{\ast \,n}\left( \zeta \right)  \\
&=k^{\ast \,-1}\left( \eta \right) +\sum_{n=0}^{\infty}\left(
-1\right)
^{n}\bar{k}^{\ast \,\left( n+1\right)}\left( \eta \right)  \\
&=k^{\ast \,-1}\left( \eta \right) +\sum_{n=1}^{\infty}\left(
-1\right) ^{n-1}\bar{k}^{\ast \,n}\left( \eta \right) =k^{\ast
\,-1}\left( \eta \right) -\sum_{n=1}^{\infty}\left( -1\right)
^{n}\bar{k}^{\ast \,n}\left(
\eta \right)  \\
&=k^{\ast \,-1}\left( \eta \right) -\left( \sum_{m=0}^{\infty}\left(
-1\right) ^{m}\bar{k}^{\ast \,m}\left( \eta \right) -1^{\ast}\left(
\eta \right) \right) =1^{\ast}\left(\eta\right) ,
\end{align*}
which proves the statement.
\end{proof}

For studying the second example, we consider for any $x\in{X}$ the following measurable mapping
\begin{equation}  \label{defder}
({\mathcal{D}}_x G)(\eta):=G(\eta\cup x),\quad G\in L^0(\Ga_0).
\end{equation}
It is easy to check that this mapping is satisfied ``the chain rule'', namely,
\begin{equation}  \label{derpropDx}
{\mathcal{D}}_x(G_1*G_2)=({\mathcal{D}}_x
G_1)*G_2+G_1*({\mathcal{D}}_x G_2), \quad x\in{X},
\end{equation}
for any $G_1, G_2\in L^0(\Ga_0)$. Note that~${\mathcal{D}}_x 1^\ast =0$. Hence, \eqref{defastexp} yields that
\begin{equation}\label{DxOfExp}
{\mathcal{D}}_x\exp^\ast u={\mathcal{D}}_x u\, \ast \exp^\ast u, \quad u\in{%
\mathcal{I}}_0.
\end{equation}

\begin{proposition}
\label{logexp} Let $k\in{\mathcal{I}}_1$. Then, there exists
\begin{equation*}
(\ln^\ast k)(\eta):=\sum_{n=1}^\infty \frac{(-1)^{n-1}}{n} \, \bar{k}^{\ast
n}(\eta),~\quad \eta\in\Gamma_0
\end{equation*}
such that $\ln^\ast k\in{\mathcal{I}}_0$, and, moreover,
\begin{equation*}
\ln^\ast \exp^\ast u=u, \quad u\in{\mathcal{I}}_0,\qquad \qquad
\exp^\ast \ln^\ast k = k, \quad k\in{\mathcal{I}}_1.
\end{equation*}
\end{proposition}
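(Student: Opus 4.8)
The plan is to reduce both identities to a single composition rule for the $\ast$-functional calculus and then to apply it with the elementary power series $e^t-1$ and $\ln(1+t)$.

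First, existence of $\ln^\ast k$ and the inclusion $\ln^\ast k\in\I_0$ are immediate. Since $\bar{k}=k-1^\ast\in\I_0$, the relation $u^{\ast n}(\eta)=0$ for $n>|\eta|$ (established above for $u\in\I_0$) shows that, for each fixed $\eta\in\Ga_0$, the defining series $\sum_{n\geq1}\frac{(-1)^{n-1}}{n}\bar{k}^{\ast n}(\eta)$ has only finitely many nonzero terms and hence is a well-defined real number; evaluating at $\eta=\emptyset$ gives $\bar{k}(\emptyset)=0$, so every summand vanishes and $(\ln^\ast k)(\emptyset)=0$, i.e.\ $\ln^\ast k\in\I_0$. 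The same finiteness remark applies to every series written below, so no question of analytic convergence of $\exp$ or $\ln(1+\cdot)$ ever arises: all manipulations are finite pointwise sums.

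The key step is the following composition rule. If $f(t)=\sum_{m\geq0}f_m t^m$ and $g(t)=\sum_{k\geq1}g_k t^k$ (so that $g(0)=0$), then for every $u\in\I_0$ one has $g^\ast u\in\I_0$ and $f^\ast(g^\ast u)=(f\circ g)^\ast u$, where $f\circ g$ denotes the ordinary composition of power series. To prove this I would expand $(g^\ast u)^{\ast m}=\sum_{k_1,\dots,k_m\geq1}g_{k_1}\cdots g_{k_m}\,u^{\ast(k_1+\cdots+k_m)}$ using commutativity and associativity of $\ast$, substitute into $f^\ast(g^\ast u)=\sum_m f_m (g^\ast u)^{\ast m}$, and collect terms by the total power $N=k_1+\cdots+k_m$. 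Because $u^{\ast N}(\eta)=0$ for $N>|\eta|$, at each $\eta$ only finitely many pairs $(m,N)$ contribute, so the rearrangement is legitimate; the resulting coefficient of $u^{\ast N}$ is exactly $\sum_m f_m\sum_{k_1+\cdots+k_m=N}g_{k_1}\cdots g_{k_m}$, which is the coefficient of $t^N$ in $f(g(t))$. This yields the claimed identity. The only thing requiring care here is precisely this bookkeeping of sums, but pointwise finiteness makes it routine rather than delicate; this is where I would expect the bulk of the writing, not any conceptual difficulty.

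With the composition rule in hand both identities drop out. Writing $E(t)=e^t-1=\sum_{n\geq1}\frac1{n!}t^n$ and $L(t)=\ln(1+t)=\sum_{n\geq1}\frac{(-1)^{n-1}}{n}t^n$, note that $\exp^\ast u=1^\ast+E^\ast u$ and $\ln^\ast k=L^\ast\bar{k}$ by definition. For $u\in\I_0$ put $v=\exp^\ast u$; then $\bar{v}=E^\ast u$, so $\ln^\ast\exp^\ast u=L^\ast(E^\ast u)=(L\circ E)^\ast u=u$, since $L(E(t))=\ln(e^t)=t$ and $\mathrm{id}^\ast u=u^{\ast1}=u$. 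Conversely, for $k\in\I_1$ put $w=\ln^\ast k=L^\ast\bar{k}\in\I_0$; then $\exp^\ast w=1^\ast+E^\ast w=1^\ast+E^\ast(L^\ast\bar{k})=1^\ast+(E\circ L)^\ast\bar{k}=1^\ast+\bar{k}=k$, using $E(L(t))=(1+t)-1=t$. This completes the plan; as an alternative for the two identities one could instead induct on $|\eta|$ or differentiate via $\D_x$ using \eqref{DxOfExp}, but the composition rule treats both cases uniformly and makes transparent why the same combinatorics as for ordinary $\exp$ and $\log$ governs the $\ast$-calculus.
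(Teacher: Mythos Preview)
Your proof is correct and takes a genuinely different route from the paper. The paper argues via the derivation $\D_x$: it first shows $\D_x\ln^\ast k=\D_x k\ast k^{\ast -1}$, combines this with \eqref{DxOfExp} to obtain $\D_x(\ln^\ast\exp^\ast u)=\D_x u$, and then uses that functions in $\I_0$ are determined by their values on nonempty configurations to conclude $\ln^\ast\exp^\ast u=u$. For the converse it proves the homomorphism property $\ln^\ast(k_1\ast k_2)=\ln^\ast k_1+\ln^\ast k_2$ (again via $\D_x$ and Proposition~\ref{inverseast}) and deduces that $\ln^\ast$ is injective on $\I_1$, whence $\exp^\ast\ln^\ast k=k$.

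Your approach replaces all of this by a single algebraic lemma, the composition rule $f^\ast(g^\ast u)=(f\circ g)^\ast u$ for formal power series with $g(0)=0$, and then invokes the classical identities $L\circ E=E\circ L=\mathrm{id}$. This is cleaner and more general: it isolates exactly the combinatorial content (associativity and commutativity of $\ast$, together with pointwise finiteness from $u\in\I_0$) and shows that no feature of $\Ga_0$ beyond this is used. The paper's route, by contrast, exploits the point-insertion operator $\D_x$ specific to configuration spaces and the already-proved existence of $\ast$-inverses; it buys a logarithm homomorphism identity along the way, which is of independent interest, but at the cost of a longer argument for the proposition itself.
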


\begin{proof}
The inclusion $\ln^\ast k\in\I_0$ is obvious. Next,
\eqref{derpropDx} and \eqref{inverse} yield that, for all $\eta\in\Ga_0$, $x\in{X}\setminus\eta$,
\[
\D_x \ln^\ast k(\eta)=\D_x k \ast k^{\ast \,-1}.0
\]
Here we used that that ~$\D_x\bar{k}=\D_x k$. Therefore, using \eqref{DxOfExp} we obtain
\begin{align*}
\D_x \ln^\ast\exp^\ast u&=\D_x\exp^\ast u\,\ast
(\exp^\ast u)^{\ast \,-1}\\& = \D_x u\, \ast\exp^\ast
u\,\ast (\exp^\ast u)^{\ast \,-1} =\D_x u.
\end{align*}
On the other hand, if we assume that, for any $u_1, u_2\in\I_0$,
\[
\D_x u_1(\eta)=u_1(\eta\cup x)=\D_x u_2(\eta)=u_2(\eta\cup x), \quad
\eta\in\Ga_0, \ x\in{X}\setminus\eta
\]
then immediately $u_1=u_2$. As a result, $\ln^\ast \exp^\ast u=u$.

Vise versa, let $k\in\I_1$. We set $\exp^\ast \ln^\ast k = k_0$, then $k_0\in \I_1$ and, by the previous considerations, one get
\begin{equation}\label{eqlog}
\ln^\ast k_0= \ln^\ast \exp^\ast \ln^\ast k =\ln^\ast
k.
\end{equation}
Let us prove that this yields $k=k_0$. First of all it should be noted that, for all $k_1, k_2 \in\I_1$, one has $k_1\ast k_2\in\I_1$ and, moreover,
\[
(k_1\ast k_2)^{\ast\,-1}=(k_1)^{\ast\,-1}\ast
(k_1)^{\ast\,-1},
\]
since
$(k_1)^{\ast\,-1}\ast (k_1)^{\ast\,-1} \ast k_1\ast
k_2=1^\ast\ast 1^\ast =1^\ast$.
Next, we have the following
\begin{align*}
\D_x \ln^\ast(k_1\ast k_2)&=(k_1\ast k_2)^{\ast\,-1}\ast\D_x(k_1\ast
k_2)\\&=k_1^{\ast\,-1}\ast k_2^{\ast\,-1}\ast \D_x k_1\ast
k_2+k_1^{\ast\,-1}\ast k_2^{\ast\,-1}\ast k_1\ast  \D_x
k_2\\&=k_1^{\ast\,-1}\ast \D_x k_1+k_2^{\ast\,-1}\ast  \D_x k_2=\D_x \ln^\ast k_1 +\D_x \ln^\ast k_2.
\end{align*}
Therefore,
$\ln^\ast(k_1\ast k_2)=\ln^\ast k_1 +\ln^\ast k_2$.
Hence,
\[
0=\ln^\ast 1^\ast=\ln^\ast (k_2\ast k_2^{\ast\, -1})=\ln^\ast k_2+\ln^\ast k_2^{\ast\, -1},\qquad
\ln^\ast k_2^{\ast\, -1} = -\ln^\ast k_2,
\]
that yields $\ln^\ast(k_1\ast k_2^{\ast\, -1} )=\ln^\ast k_1 -\ln^\ast k_2$. As a result, \eqref{eqlog} implies
\begin{equation}\label{eq0}
\ln^\ast(k\ast k_0^{\ast\, -1} )=0.
\end{equation}
On the other hand, for any $k_3\in\I_1$, the condition $\ln^\ast k_3=0$
yields
\[
0=\D_x\ln^\ast k_3=k_3^{\ast\,-1}\ast \D_x k_3,
\]
that gives $0=\D_x k_3 (\eta)=k_3(\eta\cup x)$, $k_3=1^\ast$. Then, by~\eqref{eq0}, we get
$k\ast k_0^{\ast\, -1}=1^\ast$, $k_0=k$
which proves the assertion.
\end{proof}

\section{A multiplication operator with respect to $*$-convolution}

Let $a\in\K_{C_a,\delta_a}$ for arbitrary $C_a>0$, $\delta_a\geq0$. Then, by~Proposition~\ref{convest}, for any $C>C_a$, $\delta\geq\delta_a$, one can consider the mapping $A:\K_{C-C_a,\delta}\to\K_{C,\delta}$ given by the equality
\begin{equation}\label{convoper}
    (Ak)(\eta)=(a*k)(\eta), \quad \eta\in\Ga_0.
\end{equation}
\begin{proposition}
The operator $A$ with domain $\K_{C-C_a,\delta}$ is closable in the Banach space $\K_{C,\delta}$.
\end{proposition}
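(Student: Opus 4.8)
The plan is to verify the standard closability criterion directly in the Banach space $\K_{C,\delta}$: it suffices to show that whenever a sequence $\{k_n\}_{n\in\N}\subset\K_{C-C_a,\delta}$ satisfies $k_n\to0$ and $Ak_n=a\ast k_n\to g$, both in the norm of $\K_{C,\delta}$, then necessarily $g=0$. I would establish this by testing the functions $a\ast k_n$ against an arbitrary $H\in\Bbs$ and passing to the limit, using the Minlos-type identity \eqref{minlosid-ast} to transfer the convolution off $k_n$ onto a fixed weight.

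First I would record that for each $H\in\Bbs$ the linear functional $k\mapsto\int_{\Ga_0}H(\eta)k(\eta)\,d\la(\eta)$ is continuous on $\K_{C,\delta}$. Indeed, if $H$ is bounded and vanishes outside some $B\subset\bigsqcup_{m\le M}\Ga_\La^{(m)}$, then $\int_{\Ga_0}|H(\eta)|\,C^{|\eta|}(|\eta|!)^\delta\,d\la(\eta)<\infty$ (a finite sum of the finite integrals over $\Ga_\La^{(m)}$, $m\le M$), so the functional is dominated by $\|k\|_{C,\delta}$ times this constant. Consequently the left-hand sides converge: $\int_{\Ga_0}H\,(a\ast k_n)\,d\la\to\int_{\Ga_0}H\,g\,d\la$.

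Next I would apply Proposition~\ref{Minlos-ast} with $G_1=a$ and $G_2=k_n$, and then use Fubini (justified below), to rewrite
\[
\int_{\Ga_0}H(\eta)(a\ast k_n)(\eta)\,d\la(\eta)=\int_{\Ga_0}\phi(\xi)\,k_n(\xi)\,d\la(\xi),\qquad \phi(\xi):=\int_{\Ga_0}H(\eta\cup\xi)\,a(\eta)\,d\la(\eta).
\]
The decisive observation is that $\phi$ is a bounded function with bounded support: since $H$ vanishes off $B\subset\bigsqcup_{m\le M}\Ga_\La^{(m)}$, the factor $H(\eta\cup\xi)$ forces $\xi\subset\La$ and $|\xi|\le|\eta\cup\xi|\le M$, so $\phi$ is supported in a bounded set, and on that set $|\phi(\xi)|\le\|H\|_\infty\int_{\Ga_{0,\La},\,|\eta|\le M}|a(\eta)|\,d\la(\eta)<\infty$ because $a\in\K_{C_a,\delta_a}$. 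Hence $\int_{\Ga_0}|\phi(\xi)|\,C^{|\xi|}(|\xi|!)^\delta\,d\la(\xi)<\infty$, whence $\bigl|\int_{\Ga_0}\phi\,k_n\,d\la\bigr|\le\|k_n\|_{C,\delta}\int_{\Ga_0}|\phi(\xi)|\,C^{|\xi|}(|\xi|!)^\delta\,d\la(\xi)\to0$. Combining with the previous paragraph gives $\int_{\Ga_0}H\,g\,d\la=0$ for every $H\in\Bbs$; taking in particular $H=\1_B$ with $B\in\Bbg$, and recalling that $\la$ is $\sigma$-finite with the bounded sets exhausting $\Ga_0$, forces $g=0$ $\la$-a.e., i.e. $g=0$ in $\K_{C,\delta}$, which is exactly closability.

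I expect the only real work to lie in the two boundedness assertions for $\phi$ — that its support is bounded and that it is uniformly bounded there — together with checking that the finiteness of the double integral $\int\int|H(\eta\cup\xi)|\,|a(\eta)|\,|k_n(\xi)|\,d\la(\xi)\,d\la(\eta)$ (for each fixed $n$) both legitimises the hypothesis ``at least one integral is well-defined'' in Proposition~\ref{Minlos-ast} and permits the Fubini interchange. These estimates are what make both the continuity of the test functional and the vanishing of the right-hand side rigorous; once they are in place, the closability follows formally.
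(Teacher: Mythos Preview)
Your argument is correct, but it takes a genuinely different route from the paper. The paper's proof is a two-line application of the Young-type inequality of Proposition~\ref{convest}: if $k_n\to0$ in $\K_{C,\delta}$ and $a\ast k_n\to b$ in $\K_{C,\delta}$, then in the \emph{larger} space $\K_{C+C_a,\delta}$ one has
\[
\|b\|_{C+C_a,\delta}\le\|a\ast k_n\|_{C+C_a,\delta}+\|a\ast k_n-b\|_{C+C_a,\delta}\le\|a\|_{C_a,\delta_a}\|k_n\|_{C,\delta}+\|a\ast k_n-b\|_{C,\delta}\to0,
\]
so $b=0$ $\la$-a.e., hence $b=0$ in $\K_{C,\delta}$. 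In other words, the paper simply observes that $A$ extends to a bounded operator $\K_{C,\delta}\to\K_{C+C_a,\delta}$ and uses the continuous inclusion $\K_{C,\delta}\hookrightarrow\K_{C+C_a,\delta}$.

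Your approach is instead a duality argument: you test against $H\in\Bbs$ and use the Minlos identity to transfer the convolution to a ``pre-dual'' side, showing that $\phi(\xi)=\int H(\cdot\cup\xi)a\,d\la$ again lies in $\Bbs$, so that the pairing with $k_n$ tends to zero. This is essentially the observation that the pre-dual operator $A'$ (introduced later in the paper) maps $\Bbs$ into itself. Your route is longer but more self-contained in that it does not invoke the scale of spaces $\K_{C',\delta}$ with $C'>C$; the paper's route is shorter because it leverages Proposition~\ref{convest}, which has already done the work.
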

\begin{remark}
  It is easily seen that the operator $A$ is not densely defined in~$\K_{C,\delta}$.
\end{remark}
\begin{proof}
Let $\{k_n\}_{n\in\N}\subset \K_{C-C_a,\delta}$ and $\|k_n\|_{{C,\delta}}\to 0$, $n\to\infty$. Suppose that there exists $b\in\K_{C,\delta}$ such that
$\|a*k_n-b\|_{{C,\delta}}\to 0$. Then, by~Proposition~\ref{convest} and inequalities between the norms in $\K_{C,\delta}$ and $\K_{C+C_a,\delta}\supset\K_{C,\delta}\ni b$, we obtain
\begin{align*}
  \|b\|_{{C+C_a,\delta}}&\leq \|a*k_n\|_{{C+C_a,\delta}}+ \|a*k_n-b\|_{{C+C_a,\delta}}\\& \leq \|a\|_{{C_a,\delta_a}}\cdot\|k_n\|_{{C,\delta}}+\|a*k_n-b\|_{{C,\delta}}\to0,\quad
n\to\infty.
\end{align*}
Therefore, $b=0$ in~$\K_{C+C_a,\delta}$ that yields $b(\eta)=0$ for $\la$-a.a. $\eta\in\Ga_0$, hence, $b=0$ in $\K_{C,\delta}$ too.
\end{proof}

It is worth noting that if $a\in L^\infty(\Ga_0)$, then, by~Proposition~\eqref{convest}, the operator \eqref{convoper} is well-defined on the whole space $\K_{C,\delta}$, for any $C>1$, $\delta\geq 1$, therefore, it is bounded in this space.

We consider the evolution equation
\begin{equation}\label{convevol}
    \frac{\partial}{\partial t} k_t=Ak_t, \qquad k\bigr|_{t=0}=k_0.
\end{equation}
It is straightforward that the following function is an informal solution to \eqref{convevol}
\begin{equation}\label{formsol}
    k_t=\sum_{n=0}^\infty\frac{t^n}{n!}a^{*n}*k_0=\exp^*(ta)*k_0.
\end{equation}

If $a\in\I_0$ then~$\exp^*(ta)$ is point-wise defined (see \eqref{defastexp}) and, therefore, \eqref{formsol} gives a point-wise solution to \eqref{convevol}.

If $a\in L^\infty(\Ga_0)$, then, by~Corollary~\ref{cor1}, $a^{*n}\in\K_{C,0}$ for any $C\geq2$, moreover,
\[
\|\exp^*(ta)\|_{C,0}\leq 1 + t\|a\|_{L^\infty(\Ga_0)}+\sum_{n=2}\frac{t^n}{n!}\biggl(\frac{C}{C-1}\biggr)^{n-2}\|a\|_{L^\infty(\Ga_0)}^n
<\exp\biggl(\frac{Ct}{C-1}\|a\|_{L^\infty(\Ga_0)}\biggr),
\]
that yields $\exp^*(ta)\in\K_{C,0}$, $C\geq2$. Then, directly by~Proposition~\ref{convest}, the equation \eqref{convevol} has a solution in the spaces $\K_{C,\delta}$, $\delta\geq0$.

If one would like to consider solutions to \eqref{convevol} in wider spaces, for $\delta\geq1$, then one can allow $a\in\K_{C_a,\delta_a}$, $\delta_a\geq1$. In this case, by~Corollary~\ref{cor1}, $a^{*n}\in\K_{C,\delta_a}$ for any $C>C_a$, hence, the series in~\eqref{formsol} converges in~$\K_{C,\delta_a}$. Then, again by~Corollary~\ref{convest}, one get that e.g. $k_0\in\K_{C_0,\delta_a}$, $C_0<C$ yields $k_t\in\K_{C,\delta_a}$.

Let us consider the following Banach space $\mathcal{L}_{C,\delta}:=L^1\bigl(\Ga_0, C^{|\eta|}(|\eta|!)^\delta\,d\la(\eta)\bigr)$, $C>0$, $\delta\geq0$ with norm
\[
\|G\|_{\mathcal{L}_{C,\delta}}:=\int_{\Ga_0}|G(\eta)|C^{|\eta|}(|\eta|!)^\delta\,d\la(\eta)
=\sum_{n=0}^\infty\frac{C^n}{(n!)^{1-\delta}}\int_{{X}^n}|G^{(n)}(x_1,\ldots,x_n)|dm(x_1)\ldots dm(x_n).
\]
Clearly, $\Bbs\subset\mathcal{L}_{C,\delta}$ for all $C>0$, $\delta\geq0$, and the inclusions are dense. Note also that $e_\la(f)\in\mathcal{L}_{C,\delta}$ for all $C>0$, $\delta\in[0;1)$, $f\in L^1({X},dm)$.

The space $\K_{C,\delta}$ is a realization of a space which is topologically dual to~$\mathcal{L}_{C,\delta}$. Therefore, one can consider a duality between this two spaces which is given by the pairing
\begin{equation*}%\label{duality}
    \langle\!\langle G, k\rangle\!\rangle:=\int_{\Ga_0} G(\eta) k(\eta)d\la(\eta), \quad
G\in\mathcal{L}_{C,\delta}, k\in\K_{C,\delta}.
\end{equation*}

Let the operator $A'$ in $\mathcal{L}_{C,\delta}$ is given by
\begin{equation*}%\label{predualoper}
    (A'G)(\eta):=\int_{\Ga_0}G(\eta\cup\xi)a(\xi)\,d\la(\xi), \quad G\in D(A'),
\end{equation*}
and $D(A')$ consists of all $G\in\mathcal{L}_{C,\delta}$, such that~$A'G\in\mathcal{L}_{C,\delta}$. Evidently, this operator with a maximal domain is closed. By~\eqref{minlosid-ast}, we obtain
\[
\int_{\Ga_0}|A'G(\eta)|C^{|\eta|}(|\eta|!)^\delta\,d\la(\eta)
\leq \int_{\Ga_0}\bigl|G(\eta)\bigr| (|a|*C^{|\cdot|}(|\cdot|!)^\delta)(\eta)\,d\la(\eta).
\]
Then, Proposition~\ref{convest} yields that for all $a\in\K_{C_a,\delta_a}$, $C_a>0$, $\delta_a\geq0$ one has the following inclusion $\Bbs\subset D(A')$ as $C>0$, $\delta\geq0$. Therefore, $A'$ is densely defined. Moreover, for any $\delta_a\leq\delta$, one has the inclusion $\mathcal{L}_{C+C_a,\delta}\subset D(A')$. On the other hand, for $\max\{1,\delta_a\}\leq\delta$, $C_a<C$, one get~$D(A')=\mathcal{L}_{C,\delta}$, hence, the operator $A'$ is bounded in~$\mathcal{L}_{C,\delta}$.

By \eqref{minlosid-ast}, for any $G\in D(A')\subset \mathcal{L}_{C,\delta}$, $C>0$, $\delta\geq0$ and any $k\in\K_{C,\delta}$ with $Ak\in\K_{C,\delta}$, one has
\[
\langle\!\langle A'G, k\rangle\!\rangle=\langle\!\langle G, Ak\rangle\!\rangle.
\]
The operator $A'$ is said to be pre-dual to~$A$.

\begin{proposition}
Let $a\in\K_{C_a,\delta_a}$, $C_a>0$, $\delta_a\geq0$, $C>C_a$, $\delta\geq\max\{\delta_a,1\}$. Then there exists $z_0>0$ such that for all $z>z_0$ the resolvent of the operator $A'$ in the space $\mathcal{L}_{C,\delta}$ has the form
\begin{equation}\label{rez}
    \bigl(R_z(A')G\bigr)(\eta):=\bigl((z\1-A')^{-1}G\bigr)(\eta)=\sum_{n=0}^\infty\frac{1}{z^{n+1}}
\int_{\Ga_0}G(\eta\cup\xi)a^{*n}(\xi)\,d\la(\xi).
\end{equation}
\end{proposition}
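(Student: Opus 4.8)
The plan is to realize the resolvent as a Neumann series, to identify the iterated powers of $A'$ in closed form, and to check that the series converges for all large $z$. The key preliminary observation is that, under the present hypotheses $\delta\ge\max\{\delta_a,1\}$ and $C_a<C$, the operator $A'$ is bounded on $\mathcal{L}_{C,\delta}$ (as already noted above the statement), so the overall argument is the standard bounded-operator Neumann expansion; the only genuinely new content is the explicit form of the powers $(A')^n$ and the resulting value of $z_0$.

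First I would prove, by induction on $n$, the identity
\begin{equation*}
\bigl((A')^n G\bigr)(\eta)=\int_{\Ga_0}G(\eta\cup\xi)\,a^{\ast n}(\xi)\,d\la(\xi),\qquad n\in\N_0.
\end{equation*}
For $n=0$ this reduces to $\int_{\Ga_0}G(\eta\cup\xi)1^\ast(\xi)\,d\la(\xi)=G(\eta)$, since $a^{\ast0}=1^\ast$ is carried by $\{\emptyset\}$, whose $\la$-mass is $1$; the case $n=1$ is the definition of $A'$. For the inductive step I fix $\eta$, set $H_\eta(\omega):=G(\eta\cup\omega)$, and write $\bigl((A')^{n+1}G\bigr)(\eta)$ as the double integral of $H_\eta(\zeta\cup\xi)\,a^{\ast n}(\xi)\,a(\zeta)$ over $\Ga_0\times\Ga_0$. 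Applying the Minlos-type identity \eqref{minlosid-ast} with $H=H_\eta$, $G_1=a^{\ast n}$, $G_2=a$ collapses this to $\int_{\Ga_0}H_\eta(\omega)\,(a^{\ast n}\ast a)(\omega)\,d\la(\omega)$, and the associativity $a^{\ast n}\ast a=a^{\ast(n+1)}$ closes the induction.

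Second I would estimate the operator norm of $(A')^n$ to fix $z_0$. Passing to absolute values and applying \eqref{minlosid-ast} with $H=|G|$, $G_1(\cdot)=C^{|\cdot|}(|\cdot|!)^\delta$ (the norm-one generator of $\K_{C,\delta}$) and $G_2=|a^{\ast n}|$ gives $\|(A')^nG\|_{\mathcal{L}_{C,\delta}}\le\|G_1\ast|a^{\ast n}|\|_{C,\delta}\,\|G\|_{\mathcal{L}_{C,\delta}}$. Choosing an auxiliary radius $C_\ast$ with $C_a<C_\ast<C$, the Young inequality \eqref{Young2} (valid since $\delta\ge1$ and $C_\ast\ne C$, with $\max\{C,C_\ast\}=C$) and the convolution-power bound of Corollary~\ref{cor1} together yield a geometric estimate $\|(A')^n\|\le M\,r^n$ with $r=\frac{C_\ast}{C_\ast-C_a}\|a\|_{C_a,\delta}$; letting $C_\ast\uparrow C$ shows one may take $z_0=\frac{C}{C-C_a}\|a\|_{C_a,\delta}$, i.e.\ the spectral radius of $A'$ is at most this value.

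Finally, for every $z>z_0$ the series $\sum_{n=0}^\infty z^{-(n+1)}(A')^n$ converges in the operator norm of $\mathcal{L}_{C,\delta}$, and the usual telescoping $(z\1-A')\sum_{n}z^{-(n+1)}(A')^n=\1$ identifies its sum with $(z\1-A')^{-1}$. Substituting the closed form of $(A')^n$ and transferring the norm-convergent sum to the $\la$-a.e.\ pointwise level produces exactly \eqref{rez}. I expect the main obstacle to be the inductive identification of $(A')^n$: one must apply \eqref{minlosid-ast} in the right variables, treating the outer configuration $\eta$ as a frozen parameter inside $H_\eta$ and recognizing the nested integrations against $a$ and $a^{\ast n}$ as a single integration against $a^{\ast(n+1)}$. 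By contrast, the norm estimate is routine once Corollary~\ref{cor1} and \eqref{Young2} are available, and the resolvent identity is the standard Neumann argument, legitimate precisely because $A'$ is already known to be bounded here.
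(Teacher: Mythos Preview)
Your proposal is correct and follows exactly the paper's approach: recognize \eqref{rez} as the Neumann series $z^{-1}\sum_{n\ge0}z^{-n}(A')^n$, identify $(A')^nG(\eta)=\int_{\Ga_0}G(\eta\cup\xi)a^{*n}(\xi)\,d\la(\xi)$ via \eqref{minlosid-ast}, and appeal to the boundedness of $A'$ in $\mathcal{L}_{C,\delta}$ (already established just before the proposition) for convergence. Your explicit estimate of $z_0$ via Corollary~\ref{cor1} and \eqref{Young2} is additional content the paper omits---it simply takes $z_0=\|A'\|$---but the core argument is the same.
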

\begin{proof}
We first show that \eqref{rez} is a Neumann series. Indeed, $(z\1-A')^{-1}=z^{-1}\sum_{n=0}^\infty \frac{(A')^n}{z^n}$ and, using \eqref{minlosid-ast}, $(A')^nG(\eta)=\int_{\Ga_0}G(\eta\cup\xi)a^{*n}(\xi)\,d\la(\xi)$. Since $A'$ is a bounded operator in~$\mathcal{L}_{C,\delta}$, the assertion is proved.
\end{proof}
\begin{remark}
Let $a\in\I_0$. Then, for any $z\in\R$ and $k\in L^0(\Ga_0)$, there exists
\[
(z\1-A)^{-1}k=\frac{1}{z}\Bigl(1^*-\frac{a}{z}\Bigr)^{*-1}*k=\sum_{n=0}^\infty\frac{1}{z^{n+1}}a^{*n}*k,
\]
and the series is point-wise defined.
\end{remark}

We will consider three simple but important examples of a multiplication operator $A$, note that $a\in L^\infty(\Ga_0)$ in all the cases. Let $a(\eta)=1$, $\eta\in\Ga_0$. Then $Ak=K_0k$, where
\[
(K_0k)(\eta)=\sum_{\xi\subset\eta}k(\xi), \quad \eta\in\Ga_0
\]
(the meaning of the notation $K_0$ will be clear from the second part of the paper). The pre-dual operator to~$K_0$ is the so-called Mayer operator
\[
(DG)(\eta):=(K_0'G)(\eta)=\int_{\Ga_0}G(\eta\cup\xi)\,d\la(\xi), \quad \eta\in\Ga_0.
\]
Since $1=e_\la(1)$, the equality \eqref{addexp} yields $a^{*n}(\eta)=n^{|\eta|}$, $\eta\in\Ga_0$. Therefore, an informal solution to the evolution equation \eqref{convevol} is
\[
k_t=\sum_{n=0}^{\infty}\frac{n^{|\cdot|}t^n}{n!}*k_0,
\]
and, evidently, the series converges point-wise.

The second example is the case $a(\eta)=-1$, $\eta\in\Ga_0$. This defines, of course, the inverse operator
\[
(K_0^{-1}k)(\eta)=\sum_{\xi\subset\eta}(-1)^{|\eta\setminus\xi|}k(\xi), \quad \eta\in\Ga_0,
\]
since $\bigl(1*(-1)\bigr)(\eta)=\sum_{\xi\subset\eta}(-1)^{|\eta\setminus\xi|}=0^{|\eta|}=1^*(\eta)$.
In this case, the pre-dual operator is
\[
(D^{-1}G)(\eta):=((K_0^{-1})'G)(\eta)=\int_{\Ga_0}(-1)^{|\xi|}G(\eta\cup\xi)\,d\la(\xi), \quad \eta\in\Ga_0.
\]
The solution to the equation \eqref{convevol} is given by analogy, because of~$(-1)^{*n}(\eta)=(-1)^n n^{|\eta|}$, $\eta\in\Ga_0$.

Finally, let, $\sigma:{X}\to\R$ be a measurable function and
\[
a(\eta)=
\begin{cases}
\sigma(x), &\eta=\{x\},\\0,&|\eta|\neq1,
\end{cases}
\]
$\eta\in\Ga_0$. Then
\[
(Ak)(\eta)=\sum_{x\in\eta}\sigma(x) k(\eta\setminus x),\quad\eta\in\Ga_0.
\]
From the definition of $*$-convolution we obtain by the induction principle that $a^{*n}(\eta)=n!\1_{\Ga^{(n)}}(\eta)\prod_{x\in\eta}\sigma(x)$, $\eta\in\Ga_0$, $n\in\N$. Therefore,
\[
\exp^* (ta) (\eta)=\sum_{n=0}^\infty\1_{\Ga^{(n)}}(\eta)t^n\prod_{x\in\eta}\sigma(x)=e_\la(t\sigma,\eta), \quad \eta\in\Ga_0.
\]
Hence, the point-wise solution to the evolution equation
\begin{equation}\label{ev-ind}
\frac{\partial}{\partial t}k_t(\eta)=\sum_{x\in\eta}\sigma(x) k_t(\eta\setminus x),\quad k\bigr|_{t=0}=k_0,\quad\eta\in\Ga_0
\end{equation}
is the function $k_t=e_\la(t\sigma)*k_0$. It is worth noting, that, by \eqref{addexp}, $k_0=e_\la(C)\in\K_{C,0}$, $C>0$ yields $k_t(\eta)=e_\la(C+t\sigma,\eta)$, $\eta\in\Ga_0$. Therefore, if e.g. $\sigma\in L^\infty({X},dm)$, $q=\|\sigma\|_{L^\infty({X})}$, then~$k_t\in\K_{C+tq,0}$, $t\geq0$. This means that for any $C'>C$ the solution to \eqref{ev-ind} belongs to the space $\K_{C',0}$ on a finite time interval only. On the other hand, it is easily seen that for all $C'>0$, $\delta>0$, $t\geq0$ the inclusion $k_t\in\K_{C',\delta}$ holds true. It can be shown by analogy that $k_0\in\K_{C,\delta}$, $C>0$, $\delta>0$ implies $k_t\in\K_{C,\delta+\eps}$ for all $\eps>0$ and $t\geq0$.

\begin{remark}
Let $\delta\in[0;1)$. In the latter example the evolution $\K_{C,\delta}\ni k_0\mapsto k_t\in\K_{C,\delta+\eps}$ with an arbitrary $\eps>0$ can be constructed only by using the explicit expression for $\exp^*(ta)$. If we would like to obtain an estimate for~$\exp^*(ta)$ with an arbitrary $a$ using the series, then we will need to consider $\eps\geq1$. The problem is that to include $a^{*n}\in\K_{Cn,\delta}$ into the space $\K_{C,\delta+\eps}$ with $\eps$ independent on $n$, the norm of $a^{*n}$ in~$\K_{C,\delta+\eps}$ will be increase in~$n$ depending on $\eps$. Unfortunately, at present, it is known only the upper bound by the expression $\|a\|_{C,\delta}^n\exp\bigl\{\eps n^{\frac{1}{\eps}}\bigr\}$, that implies that the condition $\eps\geq1$ is sufficient for the convergence of the series $\sum_{n=0}^\infty \frac{t^n}{n!}a^{*n}$ in~$\K_{C,\delta+\eps}$. The exact asymptotic of an inclusion operator in~$n$ and~$\eps$ is unknown.
\end{remark}

\begin{remark}
Let $\D(\Ga_0)$ be a linear topological space of measurable functions on~$\Ga_0$, that is continuously embedded into~$\mathcal{L}_{C,\delta}$ for some $C>0$, $\delta\geq0$. For any $k\in\K_{C,\delta}$, the mapping $G\mapsto\int_{\Ga_0} G k\,d\la$ defines a linear continuous functional on~$\mathcal{L}_{C,\delta}$; therefore, this mapping defines a liner continuous functional on~$\D(\Ga_0)$ too. Therefore, $k$ can be considered as a regular generalized function on~$\D(\Ga_0)$. In this case, the equality \eqref{minlosid-ast} can be considered as a way to define a convolution for regular generalized functions, cf. e.g.~\cite[p.~103]{GS1964}. By associativity of $*$-convolution the operator $A$ has the following property: $A(k_1*k_2)=(Ak_1)*k_2=k_1*(Ak_2)$. An arbitrary operator on generalized functions over $(\R^d)^n$ has the same property, see e.g. \cite[p.~105]{GS1964}. However, $A$ is not satisfied to the chain rule in an algebra of function from~$L^0(\Ga_0)$ with a product given by the $*$-convolution. Derivation operators with respect to the $*$-convolution are considered in the sequel.
\end{remark}

\section{Some additional constructions}

\subsection{Convolutions of measures on $\Ga_0$}

In what follows we will need spaces of configurations of two different point types, which we denote ``$+$'' and ``$-$''. Namely, for any $Y^\pm\in\B({X})$, $n^\pm\in\N$
we consider $\Ga_{0,Y^\pm}^{\pm,(n^\pm)}:=\Ga_{0,Y^\pm}^{(n^\pm)}$, $\Ga_{0,Y^\pm}^\pm:=\Ga_{0,Y^\pm}$, $\Ga_0^\pm:=\Ga_0$ and we set $\Ga_{0,Y^+,Y^-}^{2,(n^+,n^-)}:=\Ga_{0,Y^+}^{+,(n^+)}\times\Ga_{0,Y^-}^{-,(n^-)}$, $\Ga^2_{0,Y^+,Y^-}:=\Ga_{0,Y^+}^+\times\Ga_{0,Y^-}^-$, $\Ga_0^2:=\Ga_0^+\times\Ga_0^-$.
In the case $Y^+=Y^-=Y\in\B({X})$, $n^+=n^-=n\in\N$ we will write just $\Ga_{0,Y}^{2,(n)}=\Ga_{0,Y}^{+,(n)}\times\Ga_{0,Y}^{-,(n)}$, $\Ga_{0,Y}^2=\Ga_{0,Y}^+\times\Ga_{0,Y}^-$. On the all spaces above product-topologies can be considered. These topologies will be well-correspond with expansions like $\Ga_{0,Y}^2=\bigsqcup_{n^+,n^-\in\N_0}\Ga_{0,Y}^{+,(n^+)}\times\Ga_{0,Y}^{-,(n^-)}$.
Clearly, the corresponding Borel $\sigma$-algebras will be minimal $\sigma$-algebras which  are generated by Cartesian products of Borel subsets of the configuration spaces of each type. As before, we will omit the subscript $0$ if only $Y=\La\in\Bb$.

Let us define also some notions by analogy with one-type configuration spaces. A function ${G}:\Ga_0^2\to\R$ is said to have a local support if there exists $\La\in\Bb$such that ${G}\upharpoonright_{\Ga_0^2\setminus(\Ga_\La^+\times
\Ga_\La^-)}=0$. Let $L^0_\ls(\Ga_0^2)$ denote the class of all measurable functions on~$\Ga_0^2$ which have local supports. A set ${B}\in \B(\Ga^2_0)$ is said to be bounded if there exist $\La\in\Bb$ and $N\in\N$ such that
${B}\subset\Bigl(\bigsqcup_{n=0}^N\Ga_\La^{+,(n)}\Bigr)
\times\Bigl(\bigsqcup_{n=0}^N\Ga_\La^{-,(n)}\Bigr)$.
Let $\BBg$ denote the class of all bounded subsets from~$\B(\Ga^2_0)$.
A function ${G}:\Ga_0^2\to\R$ is said to have a bounded support if there exists ${B}\in\BBg$ such that
${G}\upharpoonright_{\Ga_0^2\setminus\widetilde{B}}=0$.
Let $\BBs$ denote the class of all bounded functions on $\Ga^2_0$ which have bounded supports. A measure ${\rho}$ on $\bigl(\Ga^2_0,\B(\Ga^2_0)\bigr)$
is said to be a locally finite measure if ${\rho}({B})<\infty$, for all ${B}\in\BBg$. Let $\MLf$ denote the class of all such measures.

For an arbitrary measurable function $G:\Ga_0\to\R$, we consider the measurable function $\widetilde{G}:\Ga_0^2\to\R$ given by
\begin{equation}  \label{tildeG}
    \widetilde{G}(\eta^+,\eta^-)=
    G(\eta^+\cup\eta^-), \qquad (\eta^+,\eta^-)\in\Ga_0^2.
\end{equation}

For $\rho_i\in\Mlf$, $i=1,2$, we define a $\widehat{\rho}$ on~$\bigl(\Ga^2_0,\B(\Ga^2_0)\bigr)$ given by
$d\widehat{\rho}(\eta^+,\eta^-)=d\rho_1(\eta^+)\,d\rho_2(\eta^-)$.
On the other words, $\widehat{\rho}=\rho_1\otimes\rho_2$. Clearly,~$\widehat{\rho}\in\MLf$.

\begin{definition}
Let $\rho_i$, $i=1,2$ be measures on $\bigl(\Ga_0,\B(\Ga_0)\bigr)$. A measure $\rho$ on
$\bigl(\Ga_0,\B(\Ga_0)\bigr)$ is said to be the convolution of these measures if, for any $G:\Ga_0\to\R$ such that $\widetilde{G}\in L^1(\Ga_0^2,d\widehat{\rho})$, the following identity holds true
\begin{equation}
\int_{\Ga_0} G(\eta)d\rho(\eta)
=\int_{\Ga_0^2} \widetilde{G}(\eta^{+},\eta^-)
\,d\widehat{\rho}(\eta^+,\eta^-)
=\int_{\Ga_0^+} \int_{\Ga_0^-}G(\eta^{+}\cup\eta^-)
\,d\rho_1(\eta^+)\,d\rho_2(\eta^-).\label{convmeasfin}
\end{equation}
The notation is $\rho=\rho_1\ast\rho_2$.
\end{definition}

\begin{proposition}
Let $\rho_{1,2}\in\Mlf$, $\rho=\rho_1\ast\rho_2$.
Then $\rho\in\Mlf$.
\end{proposition}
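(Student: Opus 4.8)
The plan is to test local finiteness against indicators of bounded sets and to reduce everything to the local finiteness of $\rho_1$ and $\rho_2$ through the defining identity \eqref{convmeasfin}.

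First I would fix an arbitrary $B\in\Bbg$ and unwind the definition of boundedness: there are $\La\in\Bb$ and $N\in\N$ with $B\subset B'$, where I abbreviate $B':=\bigsqcup_{n=0}^N\Ga_\La^{(n)}$. The set $B'$ is itself bounded (it trivially satisfies the defining inclusion), so from $\rho_1,\rho_2\in\Mlf$ I obtain $\rho_1(B')<\infty$ and $\rho_2(B')<\infty$; these two finite numbers will provide the bound.

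The crucial step is a purely set-theoretic monotonicity observation. Suppose $\eta^+\cup\eta^-\in B$; then $\eta^+\cup\eta^-\in B'$, so $\eta^+\cup\eta^-\subset\La$ and $|\eta^+\cup\eta^-|\le N$. Since $\eta^\pm\subset\eta^+\cup\eta^-$, this forces $\eta^\pm\subset\La$ and $|\eta^\pm|\le|\eta^+\cup\eta^-|\le N$, i.e. $\eta^+\in B'$ and $\eta^-\in B'$. In terms of the map \eqref{tildeG} applied to $G=\1_B$, this is precisely the pointwise inequality $\widetilde{\1_B}(\eta^+,\eta^-)\le\1_{B'}(\eta^+)\,\1_{B'}(\eta^-)$.

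Integrating this inequality against $\widehat{\rho}=\rho_1\otimes\rho_2$ (all integrands are nonnegative, so Tonelli applies) yields $\int_{\Ga_0^2}\widetilde{\1_B}\,d\widehat{\rho}\le\rho_1(B')\,\rho_2(B')<\infty$, which in particular shows $\widetilde{\1_B}\in L^1(\Ga_0^2,d\widehat{\rho})$. Hence the defining identity \eqref{convmeasfin} is applicable with $G=\1_B$, giving $\rho(B)=\int_{\Ga_0}\1_B\,d\rho=\int_{\Ga_0^2}\widetilde{\1_B}\,d\widehat{\rho}\le\rho_1(B')\,\rho_2(B')<\infty$. As $B\in\Bbg$ was arbitrary, $\rho\in\Mlf$. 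I expect the only point requiring genuine care to be the monotonicity of $B'$ under passing to sub-configurations (controlling both the cardinality and the support of $\eta^\pm$ from those of $\eta^+\cup\eta^-$); everything else is a direct application of \eqref{convmeasfin} together with the local finiteness of the factors.
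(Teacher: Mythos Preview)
Your proof is correct and follows essentially the same route as the paper's: both fix $B\in\Bbg$, enlarge it to the ``box'' $B'=\bigsqcup_{n=0}^N\Ga_\La^{(n)}$ (the paper calls it $A_N$), use the sub-configuration monotonicity $\1_B(\eta^+\cup\eta^-)\le\1_{B'}(\eta^+)\1_{B'}(\eta^-)$, and conclude $\rho(B)\le\rho_1(B')\rho_2(B')<\infty$. If anything, you are slightly more careful in spelling out the monotonicity step and in verifying $\widetilde{\1_B}\in L^1(\Ga_0^2,d\widehat\rho)$ before invoking \eqref{convmeasfin}.
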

\begin{proof}
Let $B\in\Bbg$, hence, there exist $\La\in\Bb$ and~$N\in\N$ such that
$B\subset A_N:=\bigcup\limits_{n=0}^{N}\Ga^{(n)}_\La$. Then
\begin{align*}
\rho(B)&=\int_\Ga \1_B(\eta)\,d\rho(\eta)=\int_{\Ga_0^+}
\int_{\Ga_0^-}\1_B(\eta^{+}\cup\eta^-)
\,d\rho_1(\eta^+)\,d\rho_2(\eta^-)\\
&\leq\int_{\Ga_0^+} \int_{\Ga_0^-}\1_{A_N}(\eta^{+}\cup\eta^-)
\,d\rho_1(\eta^+)\,d\rho_2(\eta^-)\\
&\leq\int_{\Ga_0^+} \int_{\Ga_0^-}\1_{A_N}(\eta^{+})\1_{A_N}(\eta^-)
\,d\rho_1(\eta^+)\,d\rho_2(\eta^-)=\rho_1(A_N)\rho_2(A_N)<\infty.
\end{align*}
The statement is proved.
\end{proof}

The notation ``$\ast$'' for the convolution of measures coincides with the notation for the $*$-convolution of functions given by~\eqref{ast}. This is motivated by the following statement.
\begin{proposition} \label{convofcorfunc}
Let $\rho_i\in\Mlf$, $i=1,2$. Suppose that there exist the following Radon--Nikodym derivatives with respect to the Lebesgue--Poisson measure:
$k_i=\dfrac{d\rho_i}{d\lambda}$, $i=1,2$.
Then, the convolution of measures, $\rho=\rho_1\ast\rho_2$, has also a Radon--Nikodym derivative with respect to the Lebesgue--Poisson measure, $k=\dfrac{d\rho}{d\lambda}$, and, moreover, $k=k_1 * k_2$.
\end{proposition}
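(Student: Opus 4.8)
The plan is to verify that $k_1 * k_2$ is the Radon--Nikodym derivative of $\rho = \rho_1 \ast \rho_2$ directly from the defining property of the convolution of measures, using the Minlos-type identity of Proposition~\ref{Minlos-ast} as the bridge between the two worlds. The key observation is that both the definition of the measure convolution \eqref{convmeasfin} and the integration identity \eqref{minlosid-ast} are built around the same structural operation, namely integrating a function of $\eta^+ \cup \eta^-$ against a product over the two factors; this is precisely what makes the two notions of $\ast$ coincide.

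First I would fix an arbitrary test function $G : \Ga_0 \to \R$ for which the relevant integrals converge, and start from the right-hand side of \eqref{convmeasfin}. Substituting the Radon--Nikodym derivatives $d\rho_i = k_i \, d\la$ turns the double integral over $\Ga_0^+ \times \Ga_0^-$ into
\begin{equation*}
\int_{\Ga_0} G(\eta)\, d\rho(\eta)
= \int_{\Ga_0} \int_{\Ga_0} G(\eta^+ \cup \eta^-)\, k_1(\eta^+)\, k_2(\eta^-)\, d\la(\eta^-)\, d\la(\eta^+).
\end{equation*}
This is exactly the right-hand side of the Minlos identity \eqref{minlosid-ast} with $H = G$, $G_1 = k_1$, $G_2 = k_2$ (the disjointness $\eta^+ \cap \eta^- = \emptyset$ needed to read $\eta^+ \cup \eta^- = \eta^+ \sqcup \eta^-$ as an element of $\Ga_0$ is justified $\la \otimes \la$-almost everywhere by \eqref{zeroset1}, since the diagonal is a $\la$-null set). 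Applying Proposition~\ref{Minlos-ast} collapses the double integral to $\int_{\Ga_0} G(\eta)\, (k_1 \ast k_2)(\eta)\, d\la(\eta)$.

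Combining the two displays gives $\int_{\Ga_0} G\, d\rho = \int_{\Ga_0} G \cdot (k_1 \ast k_2)\, d\la$ for every admissible $G$. Since this holds for a sufficiently rich class of test functions (e.g.\ indicators of bounded Borel sets, which lie in $\Bbs$ and for which convergence is guaranteed by the local finiteness of the measures established in the preceding proposition together with the Young-type estimate of Proposition~\ref{convest}), one concludes that $\rho$ is absolutely continuous with respect to $\la$ with density $k_1 \ast k_2$; that is, $k = d\rho/d\la = k_1 \ast k_2$. The only genuine point requiring care, and the step I would expect to be the main obstacle, is the justification that the integrals are well-defined and that the interchange implicit in Proposition~\ref{Minlos-ast} is legitimate: one should check that $\widetilde{G} \in L^1(\Ga_0^2, d\widehat{\rho})$ translates, under the densities $k_i$, into the integrability hypothesis ``at least one of the integrals is well-defined'' demanded by Proposition~\ref{Minlos-ast}. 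For the density identity itself it suffices to run the argument with $G \geq 0$ (and with $|k_i|$ in place of $k_i$ to control absolute convergence), where Tonelli's theorem makes every rearrangement automatic, and then remove the sign restriction by the usual positive/negative decomposition.
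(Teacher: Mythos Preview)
Your proposal is correct and follows essentially the same route as the paper: take $G\in\Bbs$, unfold the definition \eqref{convmeasfin} of $\rho_1\ast\rho_2$, substitute $d\rho_i=k_i\,d\la$, and apply the Minlos identity \eqref{minlosid-ast} to collapse the double integral to $\int_{\Ga_0}G\,(k_1*k_2)\,d\la$. The paper's proof is the same three-line computation; your additional remarks on disjointness via \eqref{zeroset1} and on Tonelli for $G\geq0$ are sound elaborations (the appeal to Proposition~\ref{convest} is unnecessary, since for $G\in\Bbs$ finiteness is immediate).
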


\begin{proof}
Let $G\in \Bbs$. By~\eqref{convmeasfin}, one has
\begin{align*}
\int_{\Ga_0}G(\eta)\,d\rho(\eta)&=\int_{\Ga_0^+}
\int_{\Ga_0^-}G(\eta^{+}\cup\eta^-)
\,d\rho_1(\eta^+)\,d\rho_2(\eta^-)\\
&=\int_{\Ga_0^+} \int_{\Ga_0^-}G(\eta^{+}\cup\eta^-)
k_1(\eta^+)k_2(\eta^-)\,d\la(\eta^+)
\,d\la(\eta^-)\\
&=\int_{\Ga_0}G(\eta)(k_1\ast k_2)(\eta)\,d\la(\eta),
\end{align*}
where we used \eqref{minlosid-ast}. The statement is proved.
\end{proof}

\subsection{Generating functionals}

Generating functionals, a.k.a. Bogolyubov functionals, were introduced in 1946, see \cite{Bog1962}, the more recent results see e.g. in~\cite{KKO2006}. Properties of generating functionals are closely connected to properties of probability measures on spaces of locally finite configurations. In spite of this, in the first part of our work we restrict our attention to the properties of the generating functionals in the framework of spaces of finite configuration only.

Let $k\in\K_{C,\delta}$, $C>0$, $\delta\in[0;1)$. Then the functional (cf.~\cite[expr. ~(9)]{KKO2006})
\begin{equation*}%\label{predBF}
    B_k(f):=\int_{\Ga_0}e_\la(f,\eta) k(\eta)\,d\la(\eta), \quad f\in L^1:=L^1({X},dm)
\end{equation*}
is well-defined since
\[
\bigl\vert B_k(f)\bigr\vert\leq \sum_{n=0}^\infty \frac{1}{(n!)^{1-\delta}}\bigl(C\|f\|_{L^1}\bigr)^n<\infty.
\]
By \eqref{minlosid-ast} and Proposition~\ref{convest}, one get that $k_i\in\K_{C_i,\delta_i}$, $C_i>0$, $\delta_i\in[0;1)$, $i=1,2$ yield $k_1*k_2\in\K_{C,\delta}$, where $C=C_1+C_2$, $\delta=\max\{\delta_1,\delta_2\}$ and
\[
B_{k_1*k_2}(f)=B_{k_1}(f)B_{k_2}(f), \quad f\in L^1.
\]

\begin{remark}
This procedure might be easily generalized to the case of measures on~$\Ga_0$. Namely, let $\rho\in\Mlf$ be such that $e_\la(f)\in L^1(\Ga_0,d\rho)$ for all $f\in L^1$. Then, one can define the functional
\[
\tilde{B}_\rho(f):=\int_{\Ga_0}e_\la(f,\eta)\,d\rho(\eta), \quad f\in L^1.
\]
By~\eqref{convmeasfin}, we get $\tilde{B}_{\rho_1\ast\rho_2}(f)=\tilde{B}_{\rho_1}(f)\tilde{B}_{\rho_2}(f)$, $f\in L^1$. Clearly, if only $k=\dfrac{d\rho}{d\la}\geq0$ exists then $B_k=\tilde{B}_\rho$.
\end{remark}

\begin{proposition}
Let $u\in\I_0$ and suppose that there exist $C,C'>0$, $\delta,\delta'\in[0;1)$ such that $u\in\K_{C,\delta}$, $\exp^*|u|\in\K_{C',\delta'}$. Then $B_k(f)>0$ for $k=\exp^*u$ and for all $f\in L^1$.
\end{proposition}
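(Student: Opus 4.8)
The plan is to exploit the multiplicativity of the Bogolyubov functional under the $*$-convolution and thereby convert the $*$-exponential $\exp^* u$ into an ordinary scalar exponential. Recall that, as noted just above, for functions lying in the spaces $\K_{C_i,\delta_i}$ with $\delta_i\in[0;1)$ one has $B_{G_1*G_2}(f)=B_{G_1}(f)B_{G_2}(f)$; this is a direct consequence of the Minlos-type identity \eqref{minlosid-ast} applied with $H=e_\la(f)$, together with the factorization $e_\la(f,\eta\cup\xi)=e_\la(f,\eta)e_\la(f,\xi)$, valid for $\la\otimes\la$-a.a.\ disjoint pairs by \eqref{zeroset1}. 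Since $u\in\K_{C,\delta}$ with $\delta\in[0;1)$, Corollary~\ref{cor1} gives $u^{*n}\in\K_{nC,\delta}$, so each $B_{u^{*n}}(f)$ is well-defined and an induction on $n$ yields $B_{u^{*n}}(f)=\bigl(B_u(f)\bigr)^n$. If one could interchange summation and integration, then
\[
B_{\exp^* u}(f)=\sum_{n=0}^\infty\frac{1}{n!}B_{u^{*n}}(f)=\sum_{n=0}^\infty\frac{1}{n!}\bigl(B_u(f)\bigr)^n=\exp\bigl(B_u(f)\bigr),
\]
and since $B_u(f)$ is a real number, the right-hand side is strictly positive, which is the assertion.

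The step requiring care is the interchange of the series with the integral defining $B_k(f)$. First I would pass to absolute values: the estimate $|u^{*n}(\eta)|\leq |u|^{*n}(\eta)$, which follows term-by-term from the representation of $u^{*n}$ as a sum over decompositions $\eta_1\sqcup\cdots\sqcup\eta_n=\eta$, gives $\sum_{n=0}^\infty\frac{1}{n!}|u^{*n}(\eta)|\leq \exp^*|u|(\eta)$ pointwise. Using $|e_\la(f,\eta)|=e_\la(|f|,\eta)$, this produces the majorant
\[
\sum_{n=0}^\infty\frac{1}{n!}\int_{\Ga_0}\bigl|e_\la(f,\eta)\bigr|\,|u^{*n}(\eta)|\,d\la(\eta)\leq \int_{\Ga_0}e_\la(|f|,\eta)\,\exp^*|u|(\eta)\,d\la(\eta)=B_{\exp^*|u|}(|f|).
\]
Here the hypothesis $\exp^*|u|\in\K_{C',\delta'}$ with $\delta'\in[0;1)$ enters decisively: together with $|f|\in L^1$ it makes $B_{\exp^*|u|}(|f|)$ finite, by the very estimate used to show that $B_k$ is well-defined. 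With this finite majorant the dominated convergence theorem (equivalently, Fubini--Tonelli applied to the double series--integral) legitimizes the interchange.

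The only genuine obstacle is thus the absolute summability above, and it is precisely what the second hypothesis is designed to guarantee; without control of $\exp^*|u|$ one could not exclude that cancellations in $\sum_n \frac{1}{n!}u^{*n}$ are destroyed upon passing to absolute values. Once the interchange is justified, no further work is needed: the identity $B_{\exp^* u}(f)=\exp\bigl(B_u(f)\bigr)$ holds for every $f\in L^1$, and positivity is immediate. It is worth noting that the argument also shows $B_{\exp^*u}(f)$ depends on $u$ only through the scalar $B_u(f)$, which is the finite-configuration shadow of the usual correspondence between the generating functional and the logarithm of its cumulants, consistent with the role of $u$ as the cumulant of $k$ recorded after \eqref{defastexp}.
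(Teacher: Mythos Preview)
Your proof is correct and follows essentially the same approach as the paper: both establish the identity $B_{\exp^* u}(f)=\exp\bigl(B_u(f)\bigr)$ by interchanging the series with the integral via dominated convergence, using $e_\la(|f|)\,\exp^*|u|$ as the integrable majorant (this is exactly where the hypothesis $\exp^*|u|\in\K_{C',\delta'}$ is invoked), and then conclude from the multiplicativity $B_{u^{*n}}(f)=\bigl(B_u(f)\bigr)^n$. Your presentation is slightly more streamlined, absorbing the paper's separate verification of convergence of the scalar series $\sum_n \frac{1}{n!}\int e_\la(f)\,u^{*n}\,d\la$ directly into the Tonelli/dominated-convergence step.
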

\begin{proof}
Suppose that $u\in\K_{C,\delta}$, then $|B_u(f)|\leq B_{|u|}(|f|)<\infty$. Therefore, by \eqref{minlosid-ast}, we obtain
\[
\int_{\Ga_0}e_\la(|f|)|u|^{*n}\,d\la=\bigl(B_{|u|}(|f|)\bigr)^n<\infty.
\]
Hence,
\begin{equation}\label{adw3}
    \Biggl\vert\sum_{n=0}^\infty \frac{1}{n!}\int_{\Ga_0}e_\la(f)u^{*n}\,d\la\Biggr\vert
\leq \exp\bigl(B_{|u|}(|f|)\bigr)<\infty.
\end{equation}
Set $g_N:=\sum_{n=0}^N \frac{1}{n!}\int_{\Ga_0}e_\la(f)u^{*n}\,d\la\in\R$. By~\eqref{adw3}, we get that there exists a finite limit $\lim_{N\to\infty}g_N$. Next, the sequence $U_N:=\sum_{n=0}^N \frac{1}{n!} e_\la(f) u^{*n}$ has an integrable dominated function $e_\la(|f|)\exp^*|u|$ in the space $L^1(\Ga_0,d\la)$, since $\exp^*|u|\in\K_{C',\delta'}$. As a result, by the dominated convergence theorem, one has
\begin{align*}
B_k(f)&=\int_{\Gamma _{0}}e_{\lambda}\left( f,\eta \right) k\left( \eta
\right)
d\lambda \left( \eta \right)  =\int_{\Gamma _{0}}e_{\lambda}\left( f,\eta \right) \sum_{n=0}^{\infty}
\frac{1}{n!}u^{\ast n}d\lambda \left( \eta \right) \\
&=\sum_{n=0}^{\infty}\frac{1}{n!}\int_{\Gamma _{0}}e_{\lambda
}\left(
f,\eta \right) u^{\ast n}\left( \eta \right) d\lambda \left( \eta \right) =\exp \left\{ \int_{\Gamma _{0}}e_{\lambda}\left( f,\eta \right)
u\left( \eta \right) d\lambda \left( \eta \right) \right\} > 0,
\end{align*}
that proves the assertion.
\end{proof}

\begin{remark}
It is worth noting that for any $k\in\I_1$ there always exists $u\in\I_0$ such that $k=\exp^*u$. Therefore, $B_k$ is always a positive functional if only the sufficient conditions on the growth of $|u|$ and~$\exp^*|u|$ hold.
\end{remark}

\subsection{Derivation operator with respect to the $*$-convolution}
As was noted before, the operator $D_x$, given by~\eqref{defder}, is satisfied the chain rule with respect to the $*$-convolution, see \eqref{derpropDx}. Let us consider an another operator with such a property. Let $(Nk)(\eta)=|\eta|k(\eta)$, $k\in L^0(\Ga_0)$, $\eta\in\Ga_0$. Then
\begin{align*}
\bigl(N(k_1*k_2)\bigr)(\eta)&=|\eta|\sum_{\xi\subset\eta}k_1(\xi)k_2(\eta\setminus\xi)=\sum_{\xi\subset\eta}|\xi|k_1(\xi)k_2(\eta\setminus\xi)
+\sum_{\xi\subset\eta}k_1(\xi)|\eta\setminus\xi|k_2(\eta\setminus\xi)\\&=
\bigl((Nk_1)*k_2\bigr)(\eta)+\bigl(k_1*(Nk_2)\bigr)(\eta)
\end{align*}
for all $k_1,k_2\in L^0(\Ga_0)$, $\eta\in\Ga_0$.

\begin{definition}
An operator $B$ on~$L^0(\Ga_0)$ is said to be a derivation operator if $B1^*=0$ and
\begin{equation}\label{od}
    \bigl(B(k_1*k_2)\bigr)(\eta)=\bigl((Bk_1)*k_2\bigr)(\eta)+\bigl(k_1*(Bk_2)\bigr)(\eta)
\end{equation}
for $\la$-a.a. $\eta\in\Ga_0$.
\end{definition}
Note that as yet we consider these operators point-wise defined only, without any relation to some Banach spaces.

Therefore, operators $\D_x$ and~$N$ are derivation operators (since equalities $D_x1^*=N1^*=0$ are followed by definitions of these operators). A number of other examples of such operators we consider in the second part of this work.

By an induction principle, $Bu^{*n}=n(Bu)*u^{*(n-1)}$, $n\in\N$, $u\in L^0(\Ga_0)$. Then, for any $u\in\I_0$ the following (point-wise) equality holds, cf.~\eqref{DxOfExp},
\begin{equation}\label{difofexp}
    B\exp^*u=B\biggl(1^*+\sum_{n=1}^\infty \frac{1}{n!}u^{*n}\biggr)=\sum_{n=1}^\infty \frac{1}{n!}n(Bu)*u^{*(n-1)}=(Bu)*\exp^*u.
\end{equation}
The equality \eqref{difofexp} has an important corollary. Let $B$ be an derivation operator and consider the evolution equation
\[
\frac{\partial}{\partial t} k_t=Bk_t, \quad k\bigr|_{t=0}=k_0.
\]
Suppose that $k_t(\emptyset)=1$, $t\geq0$, this yields $k_t\in\I_1$. Then, by~Proposition~\ref{logexp}, for any $t\geq0$, there exists $u_t\in\I_0$ such that $k_t=\exp^*u_t$. By \eqref{difofexp}, we obtain
\begin{equation}\label{adw1}
    \frac{\partial}{\partial t} k_t=B\exp^*u_t=(Bu_t)*k_t.
\end{equation}
On the other hand, \eqref{defastexp} directly implies that, by analogy to~\eqref{difofexp},
\begin{equation}\label{adw2}
    \frac{\partial}{\partial t} k_t=\frac{\partial}{\partial t} \exp^*u_t=\frac{\partial}{\partial t} u_t*\exp^*u_t=\frac{\partial}{\partial t} u_t*k_t.
\end{equation}
By our assumption $k_t\in\I_1$, then Proposition~\ref{inverseast} yields that there exist $k_t^{*-1}\in\I_1$. If we compare now the right hand sides of \eqref{adw1} and \eqref{adw2}, and multiply them (in the sense of the $*$-convolution) on~$k_t^{*-1}$, we obtain
\[
\frac{\partial}{\partial t} u_t=Bu_t.
\]
As a result, the equation for cumulants $u_t$ coincides with the equation for functions $k_t$.

\begin{proposition}
Let $(B, D(B))$ be an operator in $\K_{C,\delta}$, $C>0$, $\delta\geq0$ with the maximal domain. Let $\bigl(B', D(B')\bigr)$ be a closed densely defined operator in $\mathcal{L}_{C,\delta}$ such that $\langle\!\langle B'G, k\rangle\!\rangle=\langle\!\langle G, Bk\rangle\!\rangle$ for all $G\in D(B')$, $k\in D(B)$. Suppose also that $G(\cdot\cup\eta)\in D(B')$ for $\la$-a.a. $\eta\in\Ga_0$ and for all $G\in D(B')$, and that, for $\la$-a.a. $\eta,\xi\in\Ga_0$,
\begin{equation}  \label{dual-sum}
(B'G)(\eta\cup\xi) =\bigl((B 'G)(\cdot\cup\xi)\bigr)(\eta)+\bigl((B ' G)(\cdot\cup\eta)\bigr)(\xi).
\end{equation}
Then, for all $k_1,k_2\in D(B)$ with $k_1*k_2\in D(B)$, $k_1*(Ak_2),(Ak_1)*k_2\in\K_{C,\delta}$, the equality \eqref{od} holds.
\end{proposition}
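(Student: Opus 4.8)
The plan is to exploit the duality between $\K_{C,\delta}$ and $\mathcal{L}_{C,\delta}$: since $\K_{C,\delta}$ realizes the dual of $\mathcal{L}_{C,\delta}$ and $D(B')$ is dense in $\mathcal{L}_{C,\delta}$, two elements of $\K_{C,\delta}$ coincide $\la$-a.e. as soon as they yield the same value of the pairing $\langle\!\langle G,\cdot\rangle\!\rangle$ for every $G\in D(B')$. Hence it suffices to verify
\[
\langle\!\langle G, B(k_1*k_2)\rangle\!\rangle=\langle\!\langle G,(Bk_1)*k_2\rangle\!\rangle+\langle\!\langle G,k_1*(Bk_2)\rangle\!\rangle,\qquad G\in D(B').
\]
First I would push $B$ onto the pre-dual side: using $k_1*k_2\in D(B)$ together with the intertwining relation $\langle\!\langle B'G,k\rangle\!\rangle=\langle\!\langle G,Bk\rangle\!\rangle$, the left-hand side equals $\langle\!\langle B'G,k_1*k_2\rangle\!\rangle$. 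Since $B'G\in\mathcal{L}_{C,\delta}$ while $k_1*k_2\in\K_{C,\delta}$, this pairing is finite, so \eqref{minlosid-ast} applies with $H=B'G$, $G_1=k_1$, $G_2=k_2$ and gives
\[
\langle\!\langle B'G,k_1*k_2\rangle\!\rangle=\int_{\Ga_0}\int_{\Ga_0}(B'G)(\eta\cup\xi)\,k_1(\eta)\,k_2(\xi)\,d\la(\xi)\,d\la(\eta).
\]

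Next I would insert the co-Leibniz identity \eqref{dual-sum}, reading $\bigl((B'G)(\cdot\cup\xi)\bigr)(\eta)$ as the operator $B'$ applied to the shifted function $G(\cdot\cup\xi)$ and then evaluated at $\eta$ (legitimate because $G(\cdot\cup\xi)\in D(B')$ for $\la$-a.a.\ $\xi$ by hypothesis). This splits the double integral into two summands. In the first, for fixed $\xi$ the inner $\eta$-integral is $\langle\!\langle B'[G(\cdot\cup\xi)],k_1\rangle\!\rangle$, which by the intertwining relation (now applied to $G(\cdot\cup\xi)\in D(B')$ and $k_1\in D(B)$) equals $\langle\!\langle G(\cdot\cup\xi),Bk_1\rangle\!\rangle=\int_{\Ga_0}G(\eta\cup\xi)(Bk_1)(\eta)\,d\la(\eta)$; integrating in $\xi$ against $k_2$ and applying \eqref{minlosid-ast} in the reverse direction with $H=G$, $G_1=Bk_1$, $G_2=k_2$ reassembles exactly $\langle\!\langle G,(Bk_1)*k_2\rangle\!\rangle$. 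Here the assumption $(Bk_1)*k_2\in\K_{C,\delta}$ guarantees that this terminal pairing, and hence the whole chain, is finite and that \eqref{minlosid-ast} is admissible. The second summand is handled symmetrically, interchanging the roles of $\eta$ and $\xi$, and produces $\langle\!\langle G,k_1*(Bk_2)\rangle\!\rangle$, using $k_1*(Bk_2)\in\K_{C,\delta}$. Summing the two and invoking the density of $D(B')$ in $\mathcal{L}_{C,\delta}$ yields \eqref{od}.

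The algebraic skeleton of the argument is purely formal, so the genuine obstacle is analytic rather than combinatorial: one must ensure that each invocation of \eqref{minlosid-ast} is legitimate. Because its statement requires only that at least one side be well-defined, I expect the main care to go into checking, at every step, that the relevant iterated integral converges and that Fubini is permissible when \eqref{dual-sum} splits the double integral; both are secured precisely by the membership hypotheses $k_1*k_2\in D(B)$, $k_1,k_2\in D(B)$, $G(\cdot\cup\xi)\in D(B')$, and $(Bk_1)*k_2,\,k_1*(Bk_2)\in\K_{C,\delta}$, each of which is consumed at exactly one place in the chain. A secondary subtlety is the correct reading of \eqref{dual-sum} as a Leibniz-type co-rule for $B'$ relative to the two partial insertions $\cdot\cup\xi$ and $\cdot\cup\eta$; it is exactly this structural hypothesis on the pre-dual side that, after the two applications of duality and of \eqref{minlosid-ast}, mirrors the derivation property \eqref{od} we wish to establish on the dual side.
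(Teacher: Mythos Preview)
Your proposal is correct and follows essentially the same route as the paper: pass to the pre-dual via $\langle\!\langle B'G,k_1*k_2\rangle\!\rangle$, open the $*$-convolution with \eqref{minlosid-ast}, apply the co-Leibniz identity \eqref{dual-sum}, return to the dual side on each summand, and close with \eqref{minlosid-ast} again. The paper's proof is exactly this chain of equalities, written out tersely without the integrability discussion you supply; your explicit accounting of which hypothesis is consumed at which step is a welcome addition but not a different idea.
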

\begin{proof}
By \eqref{minlosid-ast} and \eqref{dual-sum}, for all $G,k_1,k_2$ as above, one has
\begin{align*}
&\int_{\Ga_0} G(\eta) \bigl(B(k_1*k_2)\bigr)(\eta)d\la(\eta)=\int_{\Ga_0} (B'G)(\eta) (k_1*k_2)(\eta)d\la(\eta)\\=&\int_{\Ga_0}\int_{\Ga_0} (B'G)(\eta\cup\xi)  k_1(\eta)k_2(\xi)d\la(\eta)d\la(\xi)\\
=&\int_{\Ga_0}\int_{\Ga_0} \bigl(B'G(\cdot\cup\xi)\bigr) (\eta)k_1(\eta) k_2(\xi)d\la(\eta)d\la(\xi)\\&\quad+\int_{\Ga_0}\int_{\Ga_0} \bigl(B'G(\cdot\cup\eta)\bigr) (\xi) k_1(\eta) k_2(\xi)d\la(\eta)d\la(\xi)\\
=&\int_{\Ga_0}\int_{\Ga_0} G(\eta\cup\xi) (Bk_1)(\eta) k_2(\xi)d\la(\eta)d\la(\xi)\\&\quad+\int_{\Ga_0}\int_{\Ga_0} G(\eta\cup\xi) k_1(\eta)(B k_2)(\xi)d\la(\eta)d\la(\xi)\\=&\int_{\Ga_0} G(\eta) \bigl((Bk_1)\ast k_2\bigr)(\eta)d\la(\eta)+\int_{\Ga_0} G(\eta) \bigl(k_1\ast (B k_2)\bigr)(\eta)d\la(\eta),
\end{align*}
which proves the assertion.
\end{proof}

\subsection{$\star$-convolution of functions on~$\Ga_0$}
The following convolution between functions on~$\Ga_0$ was introduced in~\cite{KK2002}.
\begin{definition}
Set, for arbitrary measurable functions $G_1$ and~$G_2$ on
$\Ga_0$,
\begin{equation}\label{star}
 (G_1\star G_2)(\eta):=
 \sum_{\xi_1\sqcup\xi_2\sqcup\xi_3=\eta}
 G_1(\xi_1\cup\xi_2)\,G_2(\xi_2\cup\xi_3), \quad \eta\in\Ga_0,
\end{equation}
where the symbol $\sqcup$ means a disjoint union of sets.
\end{definition}

\begin{remark}
The function $G_1\star G_2$, given by \eqref{star}, is also measurable. Moreover, the classes of functions $L_\ls^0(\Ga_0)$ and~$\Bbs$ are closed with respect to $\star$-convolution, see \cite[Remarks~3.10, 3.12]{KK2002}.
\end{remark}

\begin{remark}
The equality \eqref{star} may be rewritten in the following form
\begin{equation}\label{star-q}
 (G_1\star G_2)(\eta):=
 \sum_{\xi_1\cup\xi_2=\eta}G_1(\xi_1)\,G_2(\xi_2).
\end{equation}
In turn, the convolution \eqref{ast} may be rewritten in the form similar to
 \eqref{star-q}, namely,
\begin{equation}\label{ast-q}
 (G_1 * G_2)(\eta)=\sum_{\xi_1\sqcup\xi_2=\eta}
 G_1(\xi_1)\,G_2(\xi_2).
\end{equation}
Comparing the right hand sides of \eqref{ast-q} and~\eqref{ast-q}, it is easily seen that the sum in the definition of the $\ast$-convolution is a part of the sum in the definition of the $\star$-convolution.
\end{remark}

A function $k\in L^0(\Ga_0)$ is said to be a positive definite in the sense of the $\star$-convolution, if
\begin{equation}\label{posdefstar}
 \int_{\Ga_0} (G\star G)(\eta) k(\eta)\, d\la(\eta) \geq0
\end{equation}
for all $B\in\Bbs$. We would like to check now either the set of all positive definite functions in the sense of the $\star$-convolutions be a closed set with respect to the $*$-convolution.

We start with the following convolution between measurable functions $G_1$ and~$G_2$ on
$\Ga_0^2$:
\begin{equation}\label{star2}
 (G_1\Star G_2)(\eta^+,\eta^-):=
 \sum_{\substack{\xi^+_1\sqcup\xi^+_2\sqcup\xi^+_3=\eta^+\\
 \xi^-_1\sqcup\xi^-_2\sqcup\xi^-_3=\eta^-}}
 G_1(\xi_1^+\cup\xi_2^+,\xi_1^-\cup\xi_2^-)\,
 G_2(\xi_2^+\cup\xi_3^+,\xi_2^-\cup\xi_3^-).
\end{equation}
A measurable function $k:\Ga_0^2\to\R$ is said to be a positive definite in the sense of the $\Star$-convolution if for all $G\in\BBs$
\begin{equation} \label{ex:posdef2}
\int_{\Ga_0^2}(G\Star G)(\eta^+,\eta^-) {k}(\eta^+,\eta^-)
d\lambda(\eta^+)d\lambda(\eta^-)\geq 0.
\end{equation}

\begin{proposition}\label{critposdef}
Let functions $k_i:\Ga_0\to\R$, $i=1,2$ be measurable. Then the function $k(\eta)=(k_1\ast k_2)(\eta)$ is positive definite in the sense of the $\star$-convolution on~$\Ga_0$, if only the function $\widehat{k}(\eta^+,\eta^-):=k_1(\eta^+)k_2(\eta^-)$ is positive definite in the sense of the $\Star$-convolution on~$\Ga_0^2$.
\end{proposition}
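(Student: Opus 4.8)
The plan is to reduce the $\star$-positive definiteness of $k_1\ast k_2$ on $\Ga_0$ directly to the $\Star$-positive definiteness of $\widehat k$ on $\Ga_0^2$ by transporting each one-type test function $G$ to the two-type test function $\widetilde G$ of \eqref{tildeG}. Fix $G\in\Bbs$; since $G$ is bounded with bounded support all sums and integrals below are finite. First I would apply the Minlos-type identity \eqref{minlosid-ast} with $H=G\star G$, $G_1=k_1$, $G_2=k_2$, obtaining
\begin{equation*}
\int_{\Ga_0}(G\star G)(\eta)(k_1\ast k_2)(\eta)\,d\la(\eta)
=\int_{\Ga_0}\int_{\Ga_0}(G\star G)(\eta^+\cup\eta^-)\,k_1(\eta^+)\,k_2(\eta^-)\,d\la(\eta^+)\,d\la(\eta^-).
\end{equation*}
This already pairs $\widehat k(\eta^+,\eta^-)=k_1(\eta^+)k_2(\eta^-)$ with the kernel $(G\star G)(\eta^+\cup\eta^-)$, so it remains only to recognise that kernel as a $\Star$-convolution.

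The key combinatorial step is the pointwise identity
\begin{equation*}
(G\star G)(\eta^+\cup\eta^-)=(\widetilde G\Star\widetilde G)(\eta^+,\eta^-),
\end{equation*}
valid for $\la\otimes\la$-a.a. $(\eta^+,\eta^-)$. By \eqref{zeroset1} we may assume $\eta^+\cap\eta^-=\emptyset$, so that every subconfiguration of $\eta^+\cup\eta^-$ splits uniquely along the two types. Concretely, I would expand $(G\star G)(\eta^+\cup\eta^-)$ from \eqref{star} as a sum over decompositions $\eta^+\cup\eta^-=\zeta_1\sqcup\zeta_2\sqcup\zeta_3$ and set $\zeta_i^\pm:=\zeta_i\cap\eta^\pm$. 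Because $\eta^+$ and $\eta^-$ are disjoint, the map $\zeta_i\mapsto(\zeta_i^+,\zeta_i^-)$ is a bijection between three-part decompositions of $\eta^+\cup\eta^-$ and pairs of three-part decompositions $\eta^\pm=\zeta_1^\pm\sqcup\zeta_2^\pm\sqcup\zeta_3^\pm$; moreover each factor transforms as $G(\zeta_1\cup\zeta_2)=\widetilde G(\zeta_1^+\cup\zeta_2^+,\zeta_1^-\cup\zeta_2^-)$, and likewise for $G(\zeta_2\cup\zeta_3)$. Matching this reindexed sum termwise against the definition \eqref{star2} of $\Star$ yields the identity.

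Finally I would verify that $\widetilde G\in\BBs$ whenever $G\in\Bbs$: boundedness of $\widetilde G$ is inherited from that of $G$, and if $G$ vanishes off some $\bigsqcup_{n=0}^N\Ga_\La^{(n)}$ then $\widetilde G(\eta^+,\eta^-)=G(\eta^+\cup\eta^-)$ vanishes unless $\eta^+,\eta^-\subset\La$ with $|\eta^\pm|\le N$, so $\widetilde G$ has bounded support in the sense of $\BBs$. Combining the three steps gives
\begin{equation*}
\int_{\Ga_0}(G\star G)(\eta)(k_1\ast k_2)(\eta)\,d\la(\eta)
=\int_{\Ga_0^2}(\widetilde G\Star\widetilde G)(\eta^+,\eta^-)\,\widehat k(\eta^+,\eta^-)\,d\la(\eta^+)\,d\la(\eta^-)\geq0
\end{equation*}
by the assumed $\Star$-positive definiteness of $\widehat k$ applied to $\widetilde G\in\BBs$; as $G\in\Bbs$ was arbitrary, $k_1\ast k_2$ is $\star$-positive definite. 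I expect the combinatorial identity of the second paragraph to be the only genuine obstacle, the delicate point being the a.e.\ disjointness of $\eta^+$ and $\eta^-$ furnished by \eqref{zeroset1}, which is exactly what turns the type-splitting of decompositions into a bijection; everything else is bookkeeping. Note that this argument gives only the stated sufficiency (``if only''), since the test functions reachable as $\widetilde G$ form a proper subclass of $\BBs$, so the reverse implication does not follow from the same identity.
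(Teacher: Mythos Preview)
Your proposal is correct and follows essentially the same route as the paper: apply the Minlos identity \eqref{minlosid-ast} to pass from $k_1\ast k_2$ to the product $k_1(\eta^+)k_2(\eta^-)$, use the $\la\otimes\la$-a.e.\ disjointness of $\eta^+,\eta^-$ (which the paper also derives from \eqref{zeroset1} via Fubini) to establish the pointwise identity $(G\star G)(\eta^+\cup\eta^-)=(\widetilde G\Star\widetilde G)(\eta^+,\eta^-)$, and then invoke the $\Star$-positive definiteness of $\widehat k$ against $\widetilde G\in\BBs$. Your explicit verification that $\widetilde G\in\BBs$ and your closing remark on why the converse does not follow are welcome additions that the paper leaves implicit.
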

\begin{proof}
Let $G_i\in\Bbs$ and~functions $\widetilde{G}_i\in\BBs$, $i=1,2$ are defined by analogy to~\eqref{tildeG}. Then for all $(\eta^+,\eta^-)\in\Ga^2_0$, such that~$\eta^+\cap\eta^-=\emptyset$, one has
\begin{align}
&(G_1 \star G_2)(\eta^+\cup\eta^-)=\sum_{\xi_1 \sqcup\xi_{2}\sqcup
\xi_{3}=\eta^+\cup\eta^-
} G_1(\xi _1\cup\xi _2)G_2(\xi _2\cup\xi _3)\notag\\
=&\sum_{\eta^+_1 \sqcup\eta^+_{2}\sqcup \eta^+_{3}=\eta^+}
\sum_{\eta^-_1 \sqcup\eta^-_{2}\sqcup \eta^-_{3}=\eta^-}G_1(\eta^+
_1\cup\eta^+ _2\cup  \eta^- _1\cup\eta^- _2)G_2(\eta^+ _2\cup\eta^+
_3\cup \eta^- _2\cup\eta^- _3)\notag\\=&\,(\widetilde{G}_1\Star
\widetilde{G}_2)(\eta^+,\eta^- ).\label{ex::2}
\end{align}

We proceed to show now that, for an arbitrary $z>0$,
\begin{equation}\label{eqoi}
    (\la_z\otimes\la_z) \Bigl(\bigl\{
    (\eta^+,\eta^-)\in\Ga_{0}^2\bigm| \eta^+\cap\eta^-\neq\emptyset
    \bigr\}\Bigr)=0.
\end{equation}
Indeed, for any $\eta^+\in\Ga^+_{0}$, one can define
$A_{\eta^+}:=\bigl\{ \eta^-\in\Ga^-_{0}\bigm|
\eta^+\cap\eta^-\neq\emptyset \bigr\}$.
Then we have an estimate
\begin{equation}\label{eqoii}
    \la_z (A_{\eta^+})\leq \sum_{x\in\eta^+}\la_z\Bigl(\bigl\{
\eta^-\in\Ga^-_{0}\bigm| x\in\eta^-\bigr\}\Bigr) =0,
\end{equation}
where we used \eqref{zeroset1}.
Next, using
\[
(\la_z\otimes\la_z) \Bigl(\bigl\{
(\eta^+,\eta^-)\in\Ga_{0}^2\bigm| \eta^+\cap\eta^-\neq\emptyset
\bigr\}\Bigr)=\int_{\Ga^+_{0}}\la_z\bigl(A_{\eta^+}\bigr)d\la_z(\eta^+),
\]
one has that \eqref{eqoii} implies \eqref{eqoi}.

Then, for any $G\in\Bbs$ and~$\widetilde{G}\in
\BBs$, given by~\eqref{tildeG}, we derive from \eqref{minlosid-ast} that
\begin{align}
&\int_{\Ga_0} (G\star G)(\eta)k(\eta)d\la(\eta)= \int_{\Ga_0}
(G\star G)(\eta)(k_1\ast k_2)(\eta)d\la(\eta)\notag\\ = &
\int_{\Ga_0^2} (G\star G)(\eta^+\cup\eta^-)k_1(\eta^+)
k_2(\eta^-)d\la(\eta^+)d\la(\eta^-)\notag\\
=&\int_{\Ga_0^2}
(\widetilde{G}\Star\widetilde{G})(\eta^+,\eta^-)k_1(\eta^+)
k_2(\eta^-)d\la(\eta^+)d\la(\eta^-),\label{ex::3}
\end{align}
where we used \eqref{ex::2} and~\eqref{eqoi}.

The equality \eqref{ex::3} implies immediately that the positive definiteness of $\widehat{k}=k_1\otimes k_2$ in the sense of the $\Star$-convolution yields the positive definiteness of $k=k_1\ast k_2$ in the sense of the $\star$-convolution.
The statement is proved.
\end{proof}

\begin{remark}
  One can change in \eqref{posdefstar} the measure $k\,d\la$ onto any measure $\rho\in\Mlf$ and define by an analogy the notion of a measure on $\Ga_0$ which is positive definite with respect to the $\star$-convolution. Then the results of Proposition~\ref{critposdef} may be reformulated for measures $\rho_1, \rho_2$, if we only know that
  $(\rho_1\otimes\rho_2) \bigl(\bigl\{(\eta^+,\eta^-)\in\Ga_{0}^2\bigm| \eta^+\cap\eta^-\neq\emptyset\bigr\}\bigr)=0$.
\end{remark}

\def\cprime{$'$}

\end{document}